\documentclass[reqno]{amsart}

\usepackage{a4wide}
\usepackage{color}
\usepackage{mathrsfs}
\usepackage{mathtools}
\usepackage{tikz-cd}
\usepackage{amsmath}
\usepackage{amssymb}
\numberwithin{equation}{section}
\usepackage[colorlinks,citecolor=green,linkcolor=red]{hyperref}
\usepackage{hyphenat}
\hyphenation{pro-per-ty me-tric}

\usepackage[latin1]{inputenc}

\newcommand{\B}{\mathbb{B}}
\newcommand{\N}{\mathbb{N}}
\newcommand{\R}{\mathbb{R}}
\newcommand{\sfd}{{\sf d}}
\renewcommand{\d}{{\mathrm d}}
\newcommand{\restr}[1]{\lower3pt\hbox{\(|_{#1}\)}}

\newcommand{\nchi}{{\raise.3ex\hbox{\(\chi\)}}}
\newcommand{\fr}{\penalty-20\null\hfill\(\blacksquare\)}
\newcommand{\X}{{\rm X}}
\newcommand{\Y}{{\rm Y}}
\newcommand{\mm}{\mathfrak m}
\renewcommand{\P}{{\rm P}}

\newtheorem{theorem}{Theorem}[section]
\newtheorem{corollary}[theorem]{Corollary}
\newtheorem{lemma}[theorem]{Lemma}
\newtheorem{proposition}[theorem]{Proposition}
\newtheorem{definition}[theorem]{Definition}
\newtheorem{example}[theorem]{Example}
\newtheorem{remark}[theorem]{Remark}

\linespread{1.15}
\setcounter{tocdepth}{2}

\title{Direct and inverse limits of normed modules}
\author{Enrico Pasqualetto}
\address{Department of Mathematics and Statistics,
P.O.\ Box 35 (MaD), FI-40014 University of Jyv\"{a}skyl\"{a}}
\email{enpasqua@jyu.fi}
\thanks{The author acknowledges the support by the Academy of Finland,
projects no.\ 314789 and no.\ 307333.}

\begin{document}
\date{\today} 
\keywords{Normed module, direct limit, inverse limit} 
\subjclass[2010]{53C23, 13C05, 30L99}
\begin{abstract}
The aim of this note is to study existence and main properties
of direct and inverse limits in the category of normed \(L^0\)-modules
(in the sense of Gigli) over a metric measure space.
\end{abstract}
\maketitle
\tableofcontents
\section*{Introduction}
Recent years have witnessed a growing interest of the mathematical
community towards the differential calculus on nonsmooth spaces. In this regard,
an important contribution is represented by N.\ Gigli's paper \cite{Gigli14},
wherein a first-order differential structure for metric measure
spaces has been proposed. Such theory is based upon the key notion of
\emph{normed \(L^0\)-module}, which provides a generalisation of the concept
of `space of measurable sections of a measurable Banach bundle'.
The main aim of the present manuscript is to prove that direct limits always exist in
the category of normed \(L^0\)-modules. Furthermore, we shall report the proof
of existence of inverse limits of normed \(L^0\)-modules, which has been
originally achieved in \cite{GPS18}. Finally, we will investigate the relation
between direct/inverse limits and other natural operations that are available
in this framework, such as dual and pullback.
\bigskip

\noindent\textit{Overview of the content.}
The concept of normed \(L^0\)-module that we are going to describe has been
originally introduced in \cite{Gigli14} and then further refined in \cite{Gigli17}.
We propose here an equivalent reformulation of its definition,
which is tailored to our purposes.

Let \((\X,\sfd,\mm)\) be a given metric measure space. Consider an algebraic
module \(\mathscr M\) over the commutative ring \(L^0(\mm)\) of all real-valued
Borel functions defined on \(\X\) (up to \(\mm\)-a.e.\ equality).
By \emph{pointwise norm} on \(\mathscr M\) we mean a map
\(|\cdot|\colon\mathscr M\to L^0(\mm)\) satisfying the following properties:
\[\begin{split}
|v|\geq 0\;\;\;\mm\text{-a.e.}&\quad\text{ for every }v\in\mathscr M,
\text{ with equality if and only if }v=0,\\
|v+w|\leq|v|+|w|\;\;\;\mm\text{-a.e.}&\quad\text{ for every }v,w\in\mathscr M,\\
|f\cdot v|=|f||v|\;\;\;\mm\text{-a.e.}&\quad\text{ for every }v\in\mathscr M
\text{ and }f\in L^0(\mm).
\end{split}\]
The pointwise norm \(|\cdot|\) can be naturally associated with a distance
\(\sfd_{\mathscr M}\) on \(\mathscr M\): chosen a Borel probability measure \(\mm'\)
on \(\X\) that is mutually absolutely continuous with respect to \(\mm\), we define
\[\sfd_{\mathscr M}(v,w)\coloneqq\int|v-w|\wedge 1\,\d\mm'
\quad\text{ for every }v,w\in\mathscr M.\]
Then we say that the couple \(\big(\mathscr M,|\cdot|\big)\) is
a \emph{normed \(L^0(\mm)\)-module} provided the relative metric space
\((\mathscr M,\sfd_{\mathscr M})\) is complete. The crucial example of normed
\(L^0\)-module one should keep in mind is the space of Borel vector fields
on a Riemannian manifold (with the usual pointwise operations).

Given two normed \(L^0(\mm)\)-modules \(\mathscr M\) and \(\mathscr N\),
we say that a map \(\varphi\colon\mathscr M\to\mathscr N\) is a \emph{morphism}
provided it is a morphism of \(L^0(\mm)\)-modules satisfying the inequality
\(\big|\varphi(v)\big|\leq|v|\) in the \(\mm\)-a.e.\ sense for every \(v\in\mathscr M\).
Consequently, we can consider the category of normed \(L^0(\mm)\)-modules.
The scope of these notes is to analyse direct and inverse limits in such category.
More in detail:
\begin{itemize}
\item[\(\rm i)\)] We prove that any direct system in the category of normed
\(L^0(\mm)\)-modules admits a direct limit (cf.\ Theorem \ref{thm:DL_nor_mod}).
Among other properties, we show (cf.\ Lemma \ref{lem:repr_mod}) that any normed
\(L^0(\mm)\)-module can be written as a direct limit of finitely-generated modules
(which is significant to the application b) we shall illustrate
at the end of this introduction) and (cf.\ Theorem \ref{thm:pullback_DL}) that
the direct limit functor commutes with the pullback functor.
\item[\(\rm ii)\)] Existence of inverse limits in the category of normed
\(L^0(\mm)\)-modules has been already proven by the author, together with
N.\ Gigli and E.\ Soultanis, in the paper \cite{GPS18}.
In order to make these notes self-contained, we shall recall the proof
of such fact in Theorem \ref{thm:IL_nor_mod}. We also examine several (not
previously known) properties of inverse limits in this setting; for instance,
we prove that `the dual of the direct limit coincides with the inverse limit
of the duals' (see Corollary \ref{cor:IDL_vs_dual}). On the other hand, inverse
limit functor and pullback functor do not commute (see Remark \ref{rmk:pullback_IL}).
\end{itemize}
It is worth to underline that the category of normed \(L^0(\mm)\)-modules
reduces to that of Banach spaces as soon as the reference measure \(\mm\)
is a Dirac measure \(\delta_{\bar x}\) concentrated on some point \(\bar x\in\X\),
whence the above-mentioned features of direct and inverse limits of normed
\(L^0\)-modules might be considered as a generalisation of the corresponding
ones for Banach spaces.
\bigskip

\noindent\textit{Motivation and related works.}
Besides the theoretical interest, the study of direct and inverse limits
in the category of normed \(L^0\)-modules is principally motivated by the following two
applications:
\begin{itemize}
\item[\(\rm a)\)] \textsc{Differential of a metric-valued locally Sobolev map.}
Any metric measure space \((\X,\sfd_\X,\mm)\) can be canonically associated
with a \emph{cotangent module} \(L^0_\mm(T^*\X)\) and a \emph{tangent module}
\(L^0_\mm(T\X)\), which are normed \(L^0(\mm)\)-modules that supply an
abstract notion of `measurable \(1\)-forms on \(\X\)' and `measurable
vector fields on \(\X\)', respectively; we refer the interested reader
to \cite{Gigli14,Gigli17} for a detailed description of such objects.
Moreover, there are several ways to define Sobolev maps from \((\X,\sfd_\X,\mm)\)
to a complete metric space \((\Y,\sfd_\Y)\). One of possible approaches is
via post-composition with Lipschitz functions (cf.\ \cite{HKST15}).
Given any map \(u\colon\X\to\Y\) that is locally Sobolev in the above sense,
one can always select a distinguished object \(|Du|\in L^2_{\rm loc}(\X,\sfd_\X,\mm)\)
-- called \emph{minimal weak upper gradient of \(u\)} -- which plays the role of
the `modulus of the differential of \(u\)'. The purpose of the work \cite{GPS18}
was to build the differential \(\d u\) associated to \(u\), defined as a
linear operator between (suitable variants of) tangent modules.
More precisely, in the special case in which \(|Du|\) is globally \(2\)-integrable
the differential of \(u\) is a map from \(L^0_\mm(T\X)\) to
\(\big(u^*L^0_\mu(T^*\Y)\big)^*\), where the measure \(\mu\) is defined as
\(\mu\coloneqq u_*\big(|Du|^2\,\mm\big)\). The precise choice of this finite
Borel measure \(\mu\) on \(\Y\) is due to the fact that it enjoys nice composition
properties. On the other hand, if the function \(|Du|\) is just locally \(2\)-integrable,
then the measure \(u_*\big(|Du|^2\,\mm\big)\) may no longer be \(\sigma\)-finite
(thus accordingly the cotangent module \(L^0_\mu(T^*\Y)\) is not well-defined).
The strategy one can adopt to overcome such difficulty is the following:
the family \(\mathcal F(u)\) of all open subsets \(\Omega\) of \(\X\)
satisfying \(\int_\Omega|Du|^2\,\d\mm<+\infty\) is partially ordered
by inclusion, whence the idea is to initially deal with the `approximating'
modules \(L^0_{\mu_\Omega}(T^*\Y)\) -- where we set
\(\mu_\Omega\coloneqq u_*\big(\nchi_\Omega\,|Du|^2\,\mm\big)\) --
and then pass to the inverse limit with respect to \(\Omega\in\mathcal F(u)\).
\item[\(\rm b)\)] \textsc{Concrete representation of a separable normed \(L^0\)-module.}
A significant way to build normed \(L^0\)-modules is to provide some
reasonable notion of measurable Banach bundle and consider the space of
its measurable sections (up to a.e.\ equality). Nevertheless, it is not clear
whether any normed \(L^0\)-module actually admits a similar representation.
In this direction, it is proven in \cite{LP18} that each finitely-generated
normed \(L^0\)-module can be viewed as the space of sections of some bundle.
The aim of the forthcoming paper \cite{DMLP19} is to extend this result to
all separable normed \(L^0\)-modules. One of the possible approaches to
achieve such goal is to realise any separable normed \(L^0\)-module
\(\mathscr M\) as the direct limit (with respect to a countable set of indices)
of finitely-generated normed \(L^0\)-modules \(\mathscr M_n\) and to apply the
previously known result to each module \(\mathscr M_n\).
\end{itemize}
\bigskip

\noindent\textit{Acknowledgements.}
I would like to thank Danka Lu\v{c}i\'{c} and Tapio Rajala for their
careful reading of a preliminary version of this manuscript.
\section{Preliminaries}
\subsection{Normed \texorpdfstring{\(L^0(\mm)\)}{L0(m)}-modules}
For our purposes, a \emph{metric measure space} is a triple
\((\X,\sfd,\mm)\), where \((\X,\sfd)\) is a complete and separable metric
space, while \(\mm\geq 0\) is a Radon measure on \((\X,\sfd)\).
We denote by \(L^0(\mm)\) the space of all Borel functions
\(f\colon\X\to\R\) considered up to \(\mm\)-a.e.\ equality.
It is well-known that \(L^0(\mm)\) is both a topological
vector space and a topological ring when equipped with the
usual pointwise operations and with the topology induced by the distance
\[\sfd_{L^0(\mm)}(f,g)\coloneqq\int|f-g|\wedge 1\,\d\mm'
\quad\text{ for every }f,g\in L^0(\mm),\]
where \(\mm'\) is any Borel probability measure on \(\X\) with \(\mm\ll\mm'\ll\mm\).
Given any (not necessarily countable) family \(\{f_i\}_{i\in I}\subseteq L^0(\mm)\),
we denote by \({\rm ess\,sup}_{i\in I}f_i\in L^0(\mm)\) and
\({\rm ess\,inf}_{i\in I}f_i\in L^0(\mm)\) its essential supremum and essential
infimum, respectively.
\begin{definition}[Pointwise norm]
Let \(\mathscr M\) be a module over the commutative ring \(L^0(\mm)\).
Then we say that a map \(|\cdot|\colon\mathscr M\to L^0(\mm)\) is a
\emph{pointwise seminorm} on \(\mathscr M\) provided
\[\begin{split}
|v|\geq 0&\quad\text{ for every }v\in\mathscr M,\\
|v+w|\leq|v|+|w|&\quad\text{ for every }v,w\in\mathscr M,\\
|f\cdot v|=|f||v|&\quad\text{ for every }v\in\mathscr M\text{ and }f\in L^0(\mm),
\end{split}\]
where all inequalities are intended in the \(\mm\)-a.e.\ sense.
Moreover, we say that \(|\cdot|\) is a \emph{pointwise norm} on \(\mathscr M\)
if in addition it holds that \(|v|=0\) \(\mm\)-a.e.\ if and only if \(v=0\).
\end{definition}

Any pointwise seminorm can be naturally associated with the following pseudometric:
\[\sfd_{\mathscr M}(v,w)\coloneqq\int|v-w|\wedge 1\,\d\mm'
\quad\text{ for every }v,w\in\mathscr M,\]
where \(\mm'\) is any given Borel probability measure on \(\X\)
such that \(\mm\ll\mm'\ll\mm\). It holds that \(\sfd_{\mathscr M}\)
is a distance if and only if \(|\cdot|\) is a pointwise norm.
\medskip

With this said, we can give a definition of normed \(L^0(\mm)\)-module
that is fully equivalent to the one that has been proposed in \cite{Gigli14,Gigli17}:
\begin{definition}[Normed \(L^0(\mm)\)-module]
A \emph{normed \(L^0(\mm)\)-module} is a module \(\mathscr M\) over \(L^0(\mm)\)
endowed with a pointwise norm \(|\cdot|\) whose associated distance
\(\sfd_{\mathscr M}\) is complete.
\end{definition}

A \emph{morphism} \(\varphi\colon\mathscr M\to\mathscr N\) between two
normed \(L^0(\mm)\)-modules \(\mathscr M\) and \(\mathscr N\) is any
\(L^0(\mm)\)-module morphism -- i.e.\ satisfying \(\varphi(f\cdot v)=f\cdot\varphi(v)\)
for every \(f\in L^0(\mm)\) and \(v\in\mathscr M\) -- such that
\[\big|\varphi(v)\big|\leq|v|\quad\text{ holds }
\mm\text{-a.e.\ for every }v\in\mathscr M.\]
This allows us to speak about the category of normed \(L^0(\mm)\)-modules.
\begin{example}\label{ex:Banach_are_NM}{\rm
Let us suppose that \(\mm=\delta_{\bar x}\) for some point \(\bar x\in\X\).
Then the ring \(L^0(\delta_{\bar x})\) can be canonically identified with the
field \(\R\), thus accordingly the category of normed \(L^0(\delta_{\bar x})\)-modules
is (equivalent to) the category of Banach spaces.
\fr}\end{example}
\begin{definition}[Generators]
Let \(\mathscr M\) be a normed \(L^0(\mm)\)-module. Then a family
\(S\subseteq\mathscr M\) is said to \emph{generate} \(\mathscr M\)
provided the smallest \(L^0(\mm)\)-module containing \(S\)
is \(\sfd_{\mathscr M}\)-dense in \(\mathscr M\), i.e.
\[\Big\{\sum\nolimits_{i=1}^nf_i\cdot v_i\;\Big|\;
n\in\N,\,(f_i)_{i=1}^n\subseteq L^0(\mm),\,(v_i)_{i=1}^n\subseteq S\Big\}
\quad\text{ is }\sfd_{\mathscr M}\text{-dense in }\mathscr M.\]
\end{definition}
\begin{lemma}[Metric identification]\label{lem:metric_identification}
Let \(\mathscr M\) be an \(L^0(\mm)\)-module with a pointwise
seminorm \(|\cdot|\). Consider the following equivalence relation on \(\mathscr M\):
given any \(v,w\in\mathscr M\), we declare that \(v\sim w\) provided \(|v-w|=0\)
holds \(\mm\)-a.e.\ on \(\X\). Then the quotient \(\mathscr M/\sim\)
inherits an \(L^0(\mm)\)-module structure and the map
\(\big|[v]_\sim\big|\coloneqq|v|\) is a pointwise norm on \(\mathscr M/\sim\).
\end{lemma}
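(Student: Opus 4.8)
The plan is to check, in order, that $\sim$ is an equivalence relation compatible with the module operations, that the induced operations make $\mathscr M/\!\sim$ an $L^0(\mm)$-module, and finally that $|\cdot|$ descends to a well-defined pointwise norm.

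First I would observe that the homogeneity property yields $|0|=|0\cdot v|=|0||v|=0$ $\mm$-a.e., so $\sim$ is reflexive; symmetry follows from $|v-w|=|(-1)(w-v)|=|{-1}||w-v|=|w-v|$, and transitivity is immediate from the triangle inequality. The crucial step is to verify that $\sim$ is a \emph{congruence}, i.e.\ compatible with both operations. If $v\sim v'$ and $w\sim w'$, then the triangle inequality gives
\[
|(v+w)-(v'+w')|\leq|v-v'|+|w-w'|=0\quad\mm\text{-a.e.},
\]
so $v+w\sim v'+w'$; and if $v\sim v'$ and $f\in L^0(\mm)$, then homogeneity gives $|f\cdot v-f\cdot v'|=|f||v-v'|=0$ $\mm$-a.e., so $f\cdot v\sim f\cdot v'$. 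Consequently the operations $[v]_\sim+[w]_\sim\coloneqq[v+w]_\sim$ and $f\cdot[v]_\sim\coloneqq[f\cdot v]_\sim$ are well defined, and they inherit the module axioms directly from those of $\mathscr M$, since each axiom is an equality of elements that passes to representatives.

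Next I would show that $|[v]_\sim|\coloneqq|v|$ is well defined as an element of $L^0(\mm)$. If $v\sim v'$, two applications of the triangle inequality give $|v|\leq|v-v'|+|v'|=|v'|$ and $|v'|\leq|v|$ $\mm$-a.e., hence $|v|=|v'|$ $\mm$-a.e., so the value does not depend on the chosen representative. The three seminorm properties on $\mathscr M/\!\sim$ then follow at once by reading the corresponding properties of $|\cdot|$ on representatives: nonnegativity is immediate, the triangle inequality reads $|[v+w]_\sim|=|v+w|\leq|v|+|w|=|[v]_\sim|+|[w]_\sim|$ $\mm$-a.e., and homogeneity reads $|f\cdot[v]_\sim|=|f\cdot v|=|f||v|=|f||[v]_\sim|$ $\mm$-a.e.

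Finally, to upgrade the seminorm to a norm I would use the very definition of $\sim$: we have $|[v]_\sim|=0$ $\mm$-a.e.\ if and only if $|v|=0$ $\mm$-a.e., which by definition means $v\sim 0$, i.e.\ $[v]_\sim=[0]_\sim=0$ in $\mathscr M/\!\sim$. No genuine obstacle arises in this argument: every step reduces to the triangle inequality or to homogeneity of the pointwise seminorm, and the only point requiring care is the systematic verification that the relation $\sim$ passes to the quotient, namely the congruence property together with the representative-independence of $|\cdot|$, which is precisely what makes both the induced module structure and the induced norm well defined.
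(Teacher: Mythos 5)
Your proof is correct and follows essentially the same route as the paper: quotienting out the elements of vanishing pointwise seminorm and using the (reverse) triangle inequality to see that \(|\cdot|\) descends to the quotient. The only difference is one of presentation -- the paper simply notes that \(\mathscr N=\{v\in\mathscr M\,:\,|v|=0\;\mm\text{-a.e.}\}\) is a submodule and invokes the canonical module structure on \(\mathscr M/\mathscr N\), whereas you verify by hand that \(\sim\) is a congruence, which amounts to the same verification.
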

\begin{proof}
The set \(\mathscr N\coloneqq\big\{v\in\mathscr M\,:\,|v|=0\;\mm\text{-a.e.}\big\}\)
is clearly a submodule of \(\mathscr M\), thus the quotient space
\(\mathscr M/\sim\,=\mathscr M/\mathscr N\) has a canonical \(L^0(\mm)\)-module
structure. Given that
\[\big||v|-|w|\big|\leq|v-w|\quad
\text{ holds }\mm\text{-a.e.\ for every }v,w\in\mathscr M,\]
the map \(|\cdot|\colon\mathscr M/\sim\,\to L^0(\mm)\) defined by
\(\big|[v]_\sim\big|\coloneqq|v|\) is well-posed and satisfies all
the pointwise norm axioms. This gives the statement.
\end{proof}
\begin{lemma}[Metric completion]\label{lem:metric_completion}
Let \(\mathscr M\) be an \(L^0(\mm)\)-module with a pointwise norm \(|\cdot|\).
Then there exists a unique (up to unique isomorphism) couple
\((\mathscr M_0,\iota)\), where
\begin{itemize}
\item[\(\rm i)\)] \(\mathscr M_0\) is a normed \(L^0(\mm)\)-module,
\item[\(\rm ii)\)] \(\iota\colon\mathscr M\to\mathscr M_0\)
is an \(L^0(\mm)\)-linear map preserving the pointwise norm,
\end{itemize}
such that the range \(\iota(\mathscr M)\) is dense in \(\mathscr M_0\)
with respect to the distance \(\sfd_{\mathscr M_0}\).
\end{lemma}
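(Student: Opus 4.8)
The plan is to take \(\mathscr M_0\) to be the abstract metric completion of \((\mathscr M,\sfd_{\mathscr M})\) and to transport the algebraic structure along the canonical dense isometric embedding \(\iota\colon\mathscr M\to\mathscr M_0\). The zero element and the addition extend painlessly: since \(\sfd_{\mathscr M}(v+v',w+w')\le\sfd_{\mathscr M}(v,w)+\sfd_{\mathscr M}(v',w')\) by the triangle inequality for \(|\cdot|\), addition is Lipschitz, hence uniformly continuous, and extends uniquely and continuously to \(\mathscr M_0\times\mathscr M_0\to\mathscr M_0\). Likewise, from \(\big||v|-|w|\big|\le|v-w|\) one reads that \(|\cdot|\colon(\mathscr M,\sfd_{\mathscr M})\to(L^0(\mm),\sfd_{L^0(\mm)})\) is \(1\)-Lipschitz; as \(L^0(\mm)\) is complete, it extends uniquely to a continuous map \(|\cdot|\colon\mathscr M_0\to L^0(\mm)\).

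The delicate part is the scalar multiplication, which fails to be uniformly continuous because the multipliers \(f\in L^0(\mm)\) are unbounded, so I cannot simply invoke uniform continuity. The key observation is the elementary fact that, on the finite measure space \((\X,\mm')\), multiplication by a fixed \(f\in L^0(\mm)\) preserves convergence in measure: if \(g_k\to 0\) in \(\mm'\)-measure then \(fg_k\to 0\) in \(\mm'\)-measure (split \(\{|fg_k|>\eps\}\) into \(\{|f|>M\}\) and \(\{|g_k|>\eps/M\}\), let first \(k\to\infty\) and then \(M\to\infty\)). Since \(\sfd_{\mathscr M}\)-convergence of a sequence is exactly convergence in \(\mm'\)-measure of the corresponding pointwise norms, this shows that \(w\mapsto f\cdot w\) carries \(\sfd_{\mathscr M}\)-Cauchy sequences to \(\sfd_{\mathscr M}\)-Cauchy sequences. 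Hence, given \(v\in\mathscr M_0\) and \((w_n)\subseteq\mathscr M\) with \(\iota(w_n)\to v\), I define \(f\cdot v\coloneqq\lim_n\iota(f\cdot w_n)\); the same measure-theoretic estimate shows the limit is independent of the chosen approximating sequence.

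Next I would verify the module axioms and the pointwise-norm identities on \(\mathscr M_0\) by passing the corresponding identities on \(\mathscr M\) to the limit. To sidestep the circularity of needing continuity of \(v\mapsto f\cdot v\) before it is available, I will prove each identity by selecting a convenient approximating sequence: for instance, for \((fg)\cdot v=f\cdot(g\cdot v)\) I use that \((g\cdot w_n)\subseteq\mathscr M\) is an admissible approximating sequence for \(g\cdot v\), so that \(f\cdot(g\cdot v)=\lim_n\iota\big(f\cdot(g\cdot w_n)\big)=\lim_n\iota\big((fg)\cdot w_n\big)=(fg)\cdot v\); distributivity, associativity and \(1\cdot v=v\) are analogous. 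The norm--product identity \(|f\cdot v|=|f||v|\) follows by combining \(|f\cdot w_n|=|f||w_n|\) with continuity of the extended \(|\cdot|\) and with the continuity of multiplication by \(f\) on the topological ring \(L^0(\mm)\); the triangle inequality and positivity pass to the limit because \(\le\) is closed under convergence in measure. Finally, since both \(\sfd_{\mathscr M_0}\) and \((v,w)\mapsto\int|v-w|\wedge 1\,\d\mm'\) are continuous and agree on the dense set \(\iota(\mathscr M)\times\iota(\mathscr M)\), they coincide; in particular \(|v|=0\) forces \(\sfd_{\mathscr M_0}(v,0)=0\), i.e.\ \(v=0\), so \(|\cdot|\) is a genuine pointwise norm and \(\mathscr M_0\) is complete, hence a normed \(L^0(\mm)\)-module. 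That \(\iota\) is \(L^0(\mm)\)-linear and norm-preserving with dense range is immediate from the construction.

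For uniqueness, given another admissible couple \((\mathscr M_0',\iota')\) I would define \(\Phi\colon\mathscr M_0\to\mathscr M_0'\) as the continuous extension of \(\iota(w)\mapsto\iota'(w)\). Because \(\iota\) and \(\iota'\) both preserve the pointwise norm, \(\Phi\) is a surjective isometry (its image is complete, hence closed, and dense), and the module identities pass to the limit, so \(\Phi\) is an isomorphism of normed \(L^0(\mm)\)-modules; it is the unique morphism with \(\Phi\circ\iota=\iota'\), since any such map is determined on the dense subset \(\iota(\mathscr M)\). I expect the scalar-multiplication extension --- specifically the use of the finiteness of \(\mm'\) and the careful choice of approximating sequences to avoid invoking continuity prematurely --- to be the only genuinely non-routine step.
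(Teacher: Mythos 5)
Your proposal is correct and takes essentially the same route as the paper: form the metric completion of \((\mathscr M,\sfd_{\mathscr M})\), extend the pointwise norm by \(1\)-Lipschitz continuity, and transfer the module operations, the norm axioms and the distance formula to the completion by approximation, with uniqueness following from density. The paper's proof is much terser (it hides the extension of the algebraic structure behind ``arguing by approximation''), whereas you explicitly work out the one delicate point---that multiplication by an unbounded \(f\in L^0(\mm)\) is not uniformly continuous, yet still maps \(\sfd_{\mathscr M}\)-Cauchy sequences to Cauchy sequences via the convergence-in-measure splitting on the finite measure \(\mm'\)---so your write-up is a faithful, more detailed rendering of the same argument rather than a different one.
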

\begin{proof}
Denote by \((\mathscr M_0,\iota)\) the completion of the metric
space \((\mathscr M,\sfd_{\mathscr M})\), which
is known to be unique up to unique isomorphism. The pointwise norm
\(|\cdot|\colon\iota(\mathscr M)\to L^0(\mm)\) can be easily proved
to be continuous from \(\big(\iota(\mathscr M),\sfd_{\iota(\mathscr M)}\big)\)
to \(\big(L^0(\mm),\sfd_{L^0(\mm)}\big)\), whence it can be uniquely extended
to a continuous map \(|\cdot|\colon\mathscr M_0\to L^0(\mm)\).
Arguing by approximation, we conclude that the extended map \(|\cdot|\) is
a pointwise norm on \(\mathscr M_0\) and that
\(\sfd_{\mathscr M_0}(v,w)=\int|v-w|\wedge 1\,\d\mm'\)
for every \(v,w\in\mathscr M_0\). This proves the validity
of the statement.
\end{proof}

Given any two normed \(L^0(\mm)\)-modules \(\mathscr M\) and \(\mathscr N\),
we define the space \(\textsc{Hom}(\mathscr M,\mathscr N)\) as
\[\textsc{Hom}(\mathscr M,\mathscr N)\coloneqq
\Big\{T\colon\mathscr M\to\mathscr N\;\Big|\;T\text{ is }
L^0(\mm)\text{-linear and continuous}\Big\}.\]
Standard arguments show that for any \(T\in\textsc{Hom}(\mathscr M,\mathscr N)\)
there exists \(\ell\in L^0(\mm)\) such that
\begin{equation}\label{eq:pointwise_norm_dual}
\big|T(v)\big|\leq\ell\,|v|\quad\text{ holds }
\mm\text{-a.e.\ for every }v\in\mathscr M.
\end{equation}
It turns out that the function
\begin{equation}\label{eq:ptwse_norm_T}
|T|\coloneqq{\rm ess\,sup\,}\Big\{\big|T(v)\big|\;\Big|
\;v\in\mathscr M,\,|v|\leq 1\text{ holds }\mm\text{-a.e.}\Big\}\in L^0(\mm)
\end{equation}
is the minimal function \(\ell\) (in the \(\mm\)-a.e.\ sense) for which
\eqref{eq:pointwise_norm_dual} is satisfied.
We point out that an element \(T\in\textsc{Hom}(\mathscr M,\mathscr N)\)
is a morphism between \(\mathscr M\) and \(\mathscr N\) (in the categorical
sense) if and only if \(|T|\leq 1\) holds \(\mm\)-a.e.\ on \(\X\).
Furthermore, the space \(\textsc{Hom}(\mathscr M,\mathscr N)\) inherits a
natural structure of normed \(L^0(\mm)\)-module if endowed with the
pointwise operations
\[\begin{split}
(T+S)(v)\coloneqq T(v)+S(v)&
\quad\text{ for every }T,S\in\textsc{Hom}(\mathscr M,\mathscr N),\\
(f\cdot T)(v)\coloneqq f\cdot T(v)&\quad\text{ for every }f\in L^0(\mm)
\text{ and }T\in\textsc{Hom}(\mathscr M,\mathscr N)
\end{split}\]
and with the pointwise norm operator
\(\textsc{Hom}(\mathscr M,\mathscr N)\ni T\mapsto|T|\in L^0(\mm)\)
introduced in \eqref{eq:ptwse_norm_T}.
\begin{definition}[Dual of a normed \(L^0(\mm)\)-module]
Let \(\mathscr M\) be a normed \(L^0(\mm)\)-module.
Then we define its \emph{dual} normed \(L^0(\mm)\)-module \(\mathscr M^*\) as
\[\mathscr M^*\coloneqq\textsc{Hom}\big(\mathscr M,L^0(\mm)\big).\]
(Observe that \(L^0(\mm)\) itself can be viewed as a normed \(L^0(\mm)\)-module.)
\end{definition}

Let \(\mathscr M\), \(\mathscr N\) be any two normed \(L^0(\mm)\)-modules and
let \(\varphi\colon\mathscr M\to\mathscr N\) be a given morphism.
Then the \emph{adjoint} operator \(\varphi^{\rm adj}\colon\mathscr N^*\to\mathscr M^*\)
is defined as
\begin{equation}\label{eq:adjoint_map}
\varphi^{\rm adj}(\omega)\coloneqq\omega\circ\varphi
\quad\text{ for every }\omega\in\mathscr N^*.
\end{equation}
It is immediate to check that \(\varphi^{\rm adj}\)
is a morphism of normed \(L^0(\mm)\)-modules as well.
\begin{remark}\label{rmk:L0_B_valued}{\rm
Define \(\mathcal L_1\coloneqq\mathcal L^1\restr{[0,1]}\) and
consider a Banach space \(\B\). Then the space \(L^0\big([0,1],\B\big)\) of all
(strongly) Borel maps from \([0,1]\) to \(\B\) (considered up to
\(\mathcal L_1\)-a.e.\ equality) can be easily shown to be a normed
\(L^0(\mathcal L_1)\)-module if endowed with the following operations:
\[\begin{split}
(u+v)(t)&\coloneqq u(t)+v(t),\\
(f\cdot u)(t)&\coloneqq f(t)\,u(t),\\
|u|(t)&\coloneqq{\big\|u(t)\big\|}_\B
\end{split}\quad\text{ for }\mathcal L_1\text{-a.e.\ }t\in[0,1],\]
for every \(u,v\in L^0\big([0,1],\B\big)\) and \(f\in L^0(\mathcal L_1)\).
By combining the results of \cite[Section 1.6]{Gigli14} with the properties
of the \(L^0\)-completion studied in \cite{Gigli17}, one can deduce that
\(L^0\big([0,1],\B'\big)\) is isometrically embedded into
\(L^0\big([0,1],\B\big)^*\) and that
\begin{equation}\label{eq:RNP}
L^0\big([0,1],\B\big)^*\cong L^0\big([0,1],\B'\big)
\quad\Longleftrightarrow\quad
\B'\text{ has the Radon-Nikod\'{y}m property,}
\end{equation}
where \(\B'\) stands for the dual of \(\B\) as a Banach space.
(We refer to \cite{DiestelUhl77} for the definition of the Radon-Nikod\'{y}m property
and its main properties.)
\fr}\end{remark}
\begin{theorem}[Pullback of a normed \(L^0(\mm)\)-module]\label{thm:pullback}
Let \((\X,\sfd_\X,\mm_\X)\), \((\Y,\sfd_\Y,\mm_\Y)\) be metric measure spaces.
Let \(f\colon\X\to\Y\) be a Borel map with \(f_*\mm_\X\ll\mm_\Y\).
Then it holds that:
\begin{itemize}
\item[\(\rm i)\)] Let \(\mathscr M\) be a given  normed \(L^0(\mm_\Y)\)-module.
Then there exists a unique couple \((f^*\mathscr M,f^*)\)
-- where \(f^*\mathscr M\) is a normed \(L^0(\mm_\X)\)-module and
\(f^*\colon\mathscr M\to f^*\mathscr M\) is a linear map -- such that
\begin{equation}\label{eq:properties_pullback}\begin{split}
|f^*v|=|v|\circ f\;\;\;\mm\text{-a.e.}&\quad\text{ for every }v\in\mathscr M,\\
\{f^*v\,:\,v\in\mathscr M\}&\quad\text{ generates }f^*\mathscr M.
\end{split}\end{equation}
Uniqueness is up to unique isomorphism: given any other couple
\((\mathscr M_0,T)\) with the same properties, there is a unique normed
\(L^0(\mm_\X)\)-module morphism \(\Phi\colon f^*\mathscr M\to\mathscr M_0\) such that
\[\begin{tikzcd}
\mathscr M \arrow[r,"f^*"] \arrow[rd,swap,"T"] &
f^*\mathscr M \arrow[d,"\Phi"] \\
& \mathscr M_0
\end{tikzcd}\]
is a commutative diagram.
\item[\(\rm ii)\)] Let \(\mathscr M\), \(\mathscr N\) be two given normed
\(L^0(\mm_\Y)\)-modules. Let \(\varphi\colon\mathscr M\to\mathscr N\) be
a morphism of normed \(L^0(\mm_\Y)\)-modules. Then there exists a unique
morphism \(f^*\varphi\colon f^*\mathscr M\to f^*\mathscr N\) of normed
\(L^0(\mm_\X)\)-modules such that
\[\begin{tikzcd}
\mathscr M \arrow[r,"\varphi"] \arrow[d,swap,"f^*"] &
\mathscr N \arrow[d,"f^*"] \\
f^*\mathscr M \arrow[r,swap,"f^*\varphi"] & f^*\mathscr N
\end{tikzcd}\]
is a commutative diagram.
\end{itemize}
\end{theorem}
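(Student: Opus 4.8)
The plan is to construct \(f^*\mathscr M\) by hand, as the completion of an explicit \(L^0(\mm_\X)\)-module of ``simple pullback elements'', and then to read off both the universal property in item i) and the functoriality in item ii) from this concrete model. First I would make sense of pulling back the scalar action: since \(f_*\mm_\X\ll\mm_\Y\), any \(\mm_\Y\)-a.e.\ equality is preserved under precomposition with \(f\), so \(h\mapsto h\circ f\) is a well-defined ring homomorphism \(L^0(\mm_\Y)\to L^0(\mm_\X)\), turning \(L^0(\mm_\X)\) into an \(L^0(\mm_\Y)\)-algebra. I would then introduce the \emph{pre-pullback} as the algebraic tensor product \(\mathcal P\coloneqq L^0(\mm_\X)\otimes_{L^0(\mm_\Y)}\mathscr M\), viewed as an \(L^0(\mm_\X)\)-module through its first factor, and provisionally set \(f^*v\coloneqq 1\otimes v\). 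The tensor relation yields at once \(f^*(h\cdot v)=(h\circ f)\cdot f^*v\) for \(h\in L^0(\mm_\Y)\), and \(f^*\) is linear by bilinearity of \(\otimes\).

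The heart of the construction is to equip \(\mathcal P\) with a pointwise seminorm. On the elements with \emph{simple} coefficients, namely those of the form \(\omega=\sum_{i=1}^n\nchi_{A_i}\otimes v_i\) with \(\{A_i\}_{i=1}^n\) a Borel partition of \(\X\) and \(v_i\in\mathscr M\), I would declare
\[
|\omega|\coloneqq\sum_{i=1}^n\nchi_{A_i}\,\big(|v_i|\circ f\big)\in L^0(\mm_\X).
\]
The first task is to verify that this is independent of the chosen simple representation: any two such representations admit a common refinement, on which the required identity reduces to the locality and subadditivity of \(|\cdot|\) on \(\mathscr M\), transported to \(\X\) through \(f\). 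Granting well-posedness, the three seminorm axioms follow directly from those of \(|\cdot|\) on \(\mathscr M\), while homogeneity with respect to a general \(g\in L^0(\mm_\X)\) is obtained by approximating \(g\) with simple functions. I would then invoke Lemma \ref{lem:metric_identification} to quotient by the kernel of \(|\cdot|\) and Lemma \ref{lem:metric_completion} to complete, obtaining a genuine normed \(L^0(\mm_\X)\)-module \(f^*\mathscr M\) together with the map \(f^*\colon\mathscr M\to f^*\mathscr M\), \(v\mapsto[1\otimes v]\). By construction \(|f^*v|=|v|\circ f\), and the image of \(\mathcal P\)—which is exactly the \(L^0(\mm_\X)\)-module generated by \(\{f^*v\}\)—is dense, so both properties in \eqref{eq:properties_pullback} hold.

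For the uniqueness clause I would take another couple \((\mathscr M_0,T)\) with the same two properties and define \(\Phi\) on simple elements by \(\Phi\big(\sum_i\nchi_{A_i}f^*v_i\big)\coloneqq\sum_i\nchi_{A_i}T(v_i)\). Using \(|T(v_i)|=|v_i|\circ f\) one checks that \(\Phi\) preserves the pointwise norm on a dense subset, whence it extends uniquely to a norm-preserving morphism \(f^*\mathscr M\to\mathscr M_0\) with \(\Phi\circ f^*=T\); the generating property of \(\{f^*v\}\) forces surjectivity onto a dense set together with uniqueness of \(\Phi\), yielding the claimed isomorphism. Item ii) follows from the same template: I would set \((f^*\varphi)\big(\sum_i\nchi_{A_i}f^*v_i\big)\coloneqq\sum_i\nchi_{A_i}f^*\big(\varphi(v_i)\big)\), verify well-posedness as above, and observe that the morphism bound \(|\varphi(v_i)|\le|v_i|\) pulls back to \(\big|(f^*\varphi)(\omega)\big|\le|\omega|\) on simple \(\omega\). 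Thus \(f^*\varphi\) is \(1\)-Lipschitz, extends uniquely by density to a morphism \(f^*\mathscr M\to f^*\mathscr N\), and makes the required square commute on generators, hence everywhere; uniqueness is again forced by the generating property.

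I expect the main obstacle to be precisely the well-posedness and the seminorm axioms for \(|\cdot|\) on \(\mathcal P\): one must check that the proposed value is insensitive to the representation of an element of the tensor product and behaves correctly under the relation \(g\otimes(h\cdot v)=\big(g\,(h\circ f)\big)\otimes v\). This is the step where the hypothesis \(f_*\mm_\X\ll\mm_\Y\) is genuinely used, as it is exactly what permits \(\mm_\Y\)-a.e.\ statements about \(|\cdot|\) on \(\mathscr M\) to be legitimately pulled back along \(f\); the completion and the universal-property arguments are then routine once Lemmas \ref{lem:metric_identification} and \ref{lem:metric_completion} are available.
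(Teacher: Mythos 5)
A preliminary remark on the comparison itself: the paper does \emph{not} prove Theorem \ref{thm:pullback}; it is recalled from \cite{Gigli14,Gigli17} (cf.\ the remark following it), so your proposal can only be measured against the construction given there. In outline you follow that construction: generate the pullback from ``simple'' elements \(\sum_i\nchi_{A_i}f^*v_i\), define the pointwise seminorm as \(\sum_i\nchi_{A_i}\,(|v_i|\circ f)\), quotient and complete via Lemmas \ref{lem:metric_identification} and \ref{lem:metric_completion}, and get uniqueness in i) and functoriality in ii) by density; those last steps of yours are fine. The genuine gap is exactly the step you single out as the main obstacle, and your proposed fix for it does not suffice: realizing the pre-pullback as the algebraic tensor product \(\mathcal P=L^0(\mm_\X)\otimes_{L^0(\mm_\Y)}\mathscr M\) makes well-posedness of the seminorm an issue that ``common refinement of partitions'' cannot settle. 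Equality in \(\mathcal P\) is generated by the balanced relations \(g\otimes(h\cdot v)=\big(g\,(h\circ f)\big)\otimes v\) with \emph{arbitrary} \(g\in L^0(\mm_\X)\) and \(h\in L^0(\mm_\Y)\); a chain of such relations connecting two simple-coefficient representations will in general pass through representations whose coefficients are not simple, so comparing two partition representations on a common refinement does not exhaust the relations you must respect. Concretely, after refining, well-posedness reduces to the implication: \(\nchi_A\otimes u=0\) in \(\mathcal P\) implies \(|u|\circ f=0\) \(\mm_\X\)-a.e.\ on \(A\). Nothing in your argument (nor in the paper's toolkit) yields this. One genuinely needs either the algebraic characterization of vanishing elements in a tensor product, or a duality argument: the maps \(\Lambda_L\colon\mathcal P\to L^0(\mm_\X)\), \(g\otimes v\mapsto g\,(L(v)\circ f)\) with \(L\in\mathscr M^*\), are well defined because they are balanced, and one concludes provided \(|v|={\rm ess\,sup}\big\{|L(v)|\,:\,L\in\mathscr M^*,\,|L|\leq 1\big\}\) holds -- a Hahn--Banach-type norm-attainment theorem for normed \(L^0\)-modules which is true, but is itself a nontrivial result of \cite{Gigli14,Gigli17} and is not available in this paper.

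There is a second, more minor, gap: Lemmas \ref{lem:metric_identification} and \ref{lem:metric_completion} require a pointwise seminorm defined on the \emph{whole} \(L^0(\mm_\X)\)-module, whereas you define \(|\cdot|\) only on simple-coefficient elements and propose to reach general coefficients ``by approximating \(g\) with simple functions''. As phrased this is circular, since the approximation needs a topology that the seminorm itself is supposed to provide; it can be repaired by defining \(|\omega|\) as the \(\mm_\X\)-a.e.\ limit of the seminorms of simple approximants, using the triangle inequality on simple elements to obtain the Cauchy property and independence of the approximating sequence. The cleanest fix for both gaps, and the route taken in \cite{Gigli14,Gigli17}, is to drop the tensor product altogether: take as pre-pullback the set of formal finite collections \(\{(v_i,A_i)\}_i\), with \(\{A_i\}_i\) a Borel partition of \(\X\) and \(v_i\in\mathscr M\), and \emph{define} two collections \(\{(v_i,A_i)\}_i\), \(\{(w_j,B_j)\}_j\) to be equivalent precisely when \(|v_i-w_j|\circ f=0\) holds \(\mm_\X\)-a.e.\ on \(A_i\cap B_j\) for all \(i,j\). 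Since the equivalence is defined through the norm formula itself, well-posedness of the pointwise norm on the quotient is automatic, the quotient is a module over the ring of simple functions whose scalar multiplication extends to \(L^0(\mm_\X)\) by continuity after completion, and the rest of your argument (density, uniqueness, item ii)) then goes through essentially verbatim.
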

\begin{remark}{\rm
The notion of pullback of a normed \(L^0(\mm)\)-module introduced
in Theorem \ref{thm:pullback} above fits in the framework of category theory;
we refer to \cite[Remark 1.6.4]{Gigli14} for the details.
\fr}\end{remark}
\begin{example}\label{ex:special_case_pullback}{\rm
Consider two metric measure spaces \((\Y,\sfd_\Y,\mm_\Y)\),
\(({\rm Z},\sfd_{\rm Z},\mm_{\rm Z})\) with \(\mm_{\rm Z}\) finite.
We endow the space \(\X\coloneqq{\rm Z}\times\Y\) with the product distance
\(\sfd_\X=\sfd_{\rm Z}\times\sfd_\Y\), defined as
\[(\sfd_{\rm Z}\times\sfd_\Y)\big((z_1,y_1),(z_2,y_2)\big)
\coloneqq\sqrt{\sfd_{\rm Z}^2(z_1,z_2)+\sfd_\Y^2(y_1,y_2)}
\quad\text{ for every }(z_1,y_1),(z_2,y_2)\in\X,\]
and the product measure \(\mm_\X\coloneqq\mm_{\rm Z}\otimes\mm_\Y\).
Moreover, we call \(\pi\colon\X\to\Y\) the natural projection
map \((z,y)\mapsto y\), which is continuous and satisfies
\(\pi_*\mm_\X=\mm_{\rm Z}({\rm Z})\,\mm_\Y\ll\mm_\Y\).

Given any normed \(L^0(\mm_\Y)\)-module \(\mathscr M\), we define the space
\(L^0({\rm Z},\mathscr M)\) as the family of all (strongly) Borel maps
\(V\colon{\rm Z}\to\mathscr M\) considered up to \(\mm_{\rm Z}\)-a.e.\ equality.
It is straightforward to check that the space \(L^0({\rm Z},\mathscr M)\) is a
normed \(L^0(\mm_\X)\)-module if equipped with the following operations:
\[\begin{split}
(V+W)(z)\coloneqq V(z)+W(z)\in\mathscr M&
\quad\text{ for }\mm_{\rm Z}\text{-a.e.\ }z\in{\rm Z},\\
(f\cdot V)(z)\coloneqq f(z,\cdot)\cdot V(z)\in\mathscr M&
\quad\text{ for }\mm_{\rm Z}\text{-a.e.\ }z\in{\rm Z},\\
|V|(z,y)\coloneqq\big|V(z)\big|(y)&
\quad\text{ for }\mm_\X\text{-a.e.\ }(z,y)\in\X,
\end{split}\]
for every \(V,W\in L^0({\rm Z},\mathscr M)\) and \(f\in L^0(\mm_\X)\);
this constitutes a generalisation of what has been described
in Remark \ref{rmk:L0_B_valued}. Finally, we denote by
\({\rm T}\colon\mathscr M\to L^0({\rm Z},\mathscr M)\)
the linear operator sending any element \(v\in\mathscr M\) to the map
\({\rm T}(v)\colon{\rm Z}\to\mathscr M\) identically equal to \(v\).
We thus claim that
\begin{equation}\label{eq:pullback_special_case_claim}
\big(L^0({\rm Z},\mathscr M),{\rm T}\big)\cong(\pi^*\mathscr M,\pi^*).
\end{equation}
In order to prove it, we need to show that the two properties in
\eqref{eq:properties_pullback} are satisfied. For the first one,
notice that for any \(v\in\mathscr M\) it holds that
\[\big|{\rm T}(v)\big|(z,y)=\big|{\rm T}(v)(z)\big|(y)
=|v|(y)=|v|\big(\pi(z,y)\big)=\big(|v|\circ\pi\big)(z,y)
\quad\text{ for }\mm_\X\text{-a.e.\ }(z,y)\in\X.\]
For the second one, just observe that simple maps (i.e.\ Borel
maps from \(\rm Z\) to \(\mathscr M\) whose range is of finite cardinality)
are dense in \(L^0({\rm Z},\mathscr M)\). Therefore the claim
\eqref{eq:pullback_special_case_claim} is proven.
\fr}\end{example}
\subsection{Direct and inverse limits in a category}
The purpose of this subsection is to recall the notion
of direct/inverse limit in an arbitrary category; we
refer, for instance, to \cite{MacLane98} for a detailed account on this topic.
\bigskip

Fix a \emph{directed (partially ordered) set} \((I,\leq)\), which is
a nonempty partially ordered set such that any pair of elements admits an
upper bound (i.e.\ for every \(i,j\in I\) there exists \(k\in I\) satisfying
both \(i\leq k\) and \(j\leq k\)). The directed set \((I,\leq)\) can be considered
as a small category \(\mathcal I\), whose objects are the elements of \(I\)
and whose morphisms are defined as follows: given any \(i,j\in I\), there
is a (unique) morphism \(i\to j\) if and only if \(i\leq j\).
Let us also fix an arbitrary category \(\mathcal C\).
\bigskip

A \emph{direct system} in \(\mathcal C\) over \(I\) is any couple
\(\big(\{X_i\}_{i\in I},\{\varphi_{ij}\}_{i\leq j}\big)\),
where \(\{X_i\,:\,i\in I\}\) is a family of objects of \(\mathcal C\),
while \(\{\varphi_{ij}\,:\,i,j\in I,\,i\leq j\}\) is a family of
morphisms \(\varphi_{ij}\colon X_i\to X_j\) satisfying the following properties:
\begin{itemize}
\item[\(\rm i)\)] \(\varphi_{ii}\) is the identity of \(X_i\) for every \(i\in I\).
\item[\(\rm ii)\)] \(\varphi_{ik}=\varphi_{jk}\circ\varphi_{ij}\) for every
\(i,j,k\in I\) with \(i\leq j\leq k\).
\end{itemize}
Equivalently, a direct system in \(\mathcal C\) over \(I\) is a
covariant functor \(\mathcal I\to\mathcal C\).

We can define the \emph{direct limit} of the direct system
\(\big(\{X_i\}_{i\in I},\{\varphi_{ij}\}_{i\leq j}\big)\)
via a universal property. We say that
\(\big(\varinjlim X_\star,\{\varphi_i\}_{i\in I}\big)\) -- where \(\varinjlim X_\star\)
is an object of \(\mathcal C\) and \(\{\varphi_i\,:\,i\in I\}\) is a family
of morphisms \(\varphi_i\colon X_i\to\varinjlim X_\star\) called
\emph{canonical morphisms} -- is the direct limit
of \(\big(\{X_i\}_{i\in I},\{\varphi_{ij}\}_{i\leq j}\big)\) provided the
following properties hold:
\begin{itemize}
\item[\(\rm a)\)] \(\big(\varinjlim X_\star,\{\varphi_i\}_{i\in I}\big)\)
is a \emph{target}, i.e.\ the diagram
\[\begin{tikzcd}
X_i \arrow[r,"\varphi_{ij}"] \arrow[rd,swap,"\varphi_i"] &
X_j \arrow[d,"\varphi_j"] \\
& \varinjlim X_\star
\end{tikzcd}\]
commutes for every \(i,j\in I\) such that \(i\leq j\).
\item[\(\rm b)\)] Given any target \(\big(Y,\{\psi_i\}_{i\in I}\big)\),
there exists a unique morphism \(\Phi\colon\varinjlim X_\star\to Y\) such that
\[\begin{tikzcd}
X_i \arrow[r,"\varphi_i"] \arrow[rd,swap,"\psi_i"] &
\varinjlim X_\star \arrow[d,"\Phi"] \\
& Y
\end{tikzcd}\]
is a commutative diagram for every \(i\in I\).
\end{itemize}
In general, a direct system in an arbitrary category might not admit
a direct limit. Nevertheless, whenever the direct limit exists, it has
to be unique up to unique isomorphism: given any other direct limit
\(\big(X,\{\varphi'_i\}_{i\in I}\big)\) of
\(\big(\{X_i\}_{i\in I},\{\varphi_{ij}\}_{i\leq j}\big)\),
there is a unique isomorphism
\(\mathscr I\colon X\to\varinjlim X_\star\) such that
\(\varphi_i=\mathscr I\circ\varphi'_i\) holds for every \(i\in I\).
\bigskip

An \emph{inverse system} in \(\mathcal C\) over \(I\) is any couple
\(\big(\{X_i\}_{i\in I},\{\P_{ij}\}_{i\leq j}\big)\),
where \(\{X_i\,:\,i\in I\}\) is a family of objects of \(\mathcal C\),
while \(\{\P_{ij}\,:\,i,j\in I,\,i\leq j\}\) is a family of
morphisms \(\P_{ij}\colon X_j\to X_i\) satisfying the following properties:
\begin{itemize}
\item[\(\rm i)\)] \(\P_{ii}\) is the identity of \(X_i\) for every \(i\in I\).
\item[\(\rm ii)\)] \(\P_{ik}=\varphi_{ij}\circ\varphi_{jk}\)
for every \(i,j,k\in I\) with \(i\leq j\leq k\).
\end{itemize}
Equivalently, an inverse system in \(\mathcal C\) over \(I\) is a
contravariant functor \(\mathcal I\to\mathcal C\).

We can define the \emph{inverse limit} of the inverse system
\(\big(\{X_i\}_{i\in I},\{\P_{ij}\}_{i\leq j}\big)\)
via a universal property. We say that
\(\big(\varprojlim X_\star,\{\P_i\}_{i\in I}\big)\) -- where \(\varprojlim X_\star\)
is an object of \(\mathcal C\) and \(\{\P_i\,:\,i\in I\}\) is a family
of morphisms \(\P_i\colon\varprojlim X_\star\to X_i\) called
\emph{natural projections} -- is the inverse limit
of \(\big(\{X_i\}_{i\in I},\{\P_{ij}\}_{i\leq j}\big)\) provided the
following properties hold:
\begin{itemize}
\item[\(\rm a)\)] The diagram
\[\begin{tikzcd}
\varprojlim X_\star \arrow[rd,"\P_i"] \arrow[d,swap,"\P_j"] & \\
X_j \arrow[r,swap,"\P_{ij}"] & X_i
\end{tikzcd}\]
commutes for every \(i,j\in I\) such that \(i\leq j\).
\item[\(\rm b)\)] Given any other such couple \(\big(Y,\{{\rm Q}_i\}_{i\in I}\big)\)
-- namely satisfying \({\rm Q}_i=\P_{ij}\circ{\rm Q}_j\) for all \(i,j\in I\) with
\(i\leq j\) -- there exists a unique morphism
\(\Phi\colon Y\to\varprojlim X_\star\) such that
\[\begin{tikzcd}
Y \arrow[r,"\Phi"] \arrow[rd,swap,"{\rm Q}_i"] &
\varprojlim X_\star \arrow[d,"\P_i"] \\
& X_i
\end{tikzcd}\]
is a commutative diagram for every \(i\in I\).
\end{itemize}
In general, an inverse system in an arbitrary category does not necessarily admit
an inverse limit. Nevertheless, whenever the inverse limit exists, it has
to be unique up to unique isomorphism: given any other inverse limit
\(\big(X,\{\P'_i\}_{i\in I}\big)\), there exists
a unique isomorphism \(\mathscr I\colon X\to\varprojlim X_\star\)
such that \(\P'_i=\P_i\circ\mathscr I\) holds for every \(i\in I\).
\subsection{Direct and inverse limits of
\texorpdfstring{\(R\)}{R}-modules}\label{ss:lim_mod}
For the usefulness of the reader, we report here the construction of
the direct/inverse limit of (algebraic) modules over a commutative ring.
The material we are going to present can be found, e.g., in \cite{lang84}.
Let us fix a commutative ring \(R\) and a directed partially ordered set \((I,\leq)\).
\bigskip

Let \(\big(\{M_i\}_{i\in I},\{\varphi_{ij}\}_{i\leq j}\big)\)
be a direct system of \(R\)-modules over \(I\). We define an equivalence
relation \(\sim\) on \(\bigsqcup_{i\in I}M_i\): given \(v\in M_i\)
and \(w\in M_j\), we declare that \(v\sim w\) provided there is \(k\in I\) with
\(i,j\leq k\) such that \(\varphi_{ik}(v)=\varphi_{jk}(w)\). Then we define the
direct limit of \(\big(\{M_i\}_{i\in I},\{\varphi_{ij}\}_{i\leq j}\big)\) as
\[\varinjlim M_\star\coloneqq\bigsqcup_{i\in I}M_i\Big/\sim.\]
The \(R\)-module operations on \(\varinjlim M_\star\) are defined in the following way:
\begin{itemize}
\item Let \(\boldsymbol v,\boldsymbol w\in\varinjlim M_\star\) be fixed.
Pick any \(v\in\boldsymbol v\cap M_i\) and \(w\in\boldsymbol w\cap M_j\).
Choose some \(k\in I\) such that \(i,j\leq k\). Notice that
\(\varphi_{ik}(v)\in\boldsymbol v\cap M_k\) and
\(\varphi_{jk}(w)\in\boldsymbol w\cap M_k\). Then we define the sum
\(\boldsymbol v+\boldsymbol w\in\varinjlim M_\star\) as the equivalence class of
\(\varphi_{ik}(v)+\varphi_{jk}(w)\in M_k\).
\item Let \(\boldsymbol v\in\varinjlim M_\star\) and \(r\in R\) be fixed.
Pick any \(v\in\boldsymbol v\cap M_i\). Then we define
\(r\cdot\boldsymbol v\in\varinjlim M_\star\)
as the equivalence class of \(r\cdot v\in M_i\).
\end{itemize}
It is easy to check that such operations are well-posed
and the resulting structure \(\big(\varinjlim M_\star,+,\cdot\big)\) satisfies the
\(R\)-module axioms. The canonical morphisms \(\varphi_i\colon\,M_i\to\varinjlim M_\star\)
are obtained by sending each element to its equivalence class.
We point out a fundamental property:
\begin{equation}\label{eq:M_Alg_surj}
\text{For every }\boldsymbol v\in\varinjlim M_\star\text{ there exist }i\in I
\text{ and }v\in M_i\text{ such that }\varphi_i(v)=\boldsymbol v.
\end{equation}
The above claim is a direct consequence of the very definition of \(\varinjlim M_\star\).
\bigskip

Now let us consider an inverse system
\(\big(\{M_i\}_{i\in I},\{\P_{ij}\}_{i\leq j}\big)\) of \(R\)-modules over \(I\).
Then we define its inverse limit as
\[\varprojlim M_\star\coloneqq\Big\{v=\{v_i\}_{i\in I}\in\prod\nolimits_{i\in I}M_i\;\Big|
\;v_i=\P_{ij}(v_j)\text{ for every }i,j\in I\text{ with }i\leq j\Big\}.\]
The direct product \(\prod_{i\in I}M_i\) has a natural \(R\)-module structure
with respect to the element-wise operations. It can be readily shown that
\(\varprojlim M_\star\) is an \(R\)-submodule of \(\prod_{i\in I}M_i\). Finally,
the natural projections \(\P_i\colon\varprojlim M_\star\to M_i\) are defined as
\[\P_i(v)\coloneqq v_i\quad\text{ for every }
v=\{v_i\}_{i\in I}\in\varprojlim M_\star.\]
In particular, given any family \(\{v_i\}_{i\in I}\) such that \(v_i\in M_i\)
and \(v_i=\P_{ij}(v_j)\) hold for every \(i,j\in I\) with \(i\leq j\), there exists
a unique element \(v\in\varprojlim M_\star\) such that \(\P_i(v)=v_i\) for
every \(i\in I\).
\section{Direct limits of normed \texorpdfstring{\(L^0(\mm)\)}{L0(m)}-modules}
\subsection{Definition}
Unless otherwise specified, let \((\X,\sfd,\mm)\) be a fixed metric measure space.
The aim of this subsection is to prove that direct limits exist in the category
of normed \(L^0(\mm)\)-modules.
\begin{theorem}[Direct limit of normed \(L^0(\mm)\)-modules]\label{thm:DL_nor_mod}
Let \(\big(\{\mathscr M_i\}_{i\in I},\{\varphi_{ij}\}_{i\leq j}\big)\)
be a direct system of normed \(L^0(\mm)\)-modules. Then its direct limit
\(\big(\varinjlim\mathscr M_\star,\{\varphi_i\}_{i\in I}\big)\) exists
in the category of normed \(L^0(\mm)\)-modules.
\end{theorem}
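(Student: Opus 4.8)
The plan is to construct \(\varinjlim\mathscr M_\star\) in three stages: first take the algebraic direct limit as an \(L^0(\mm)\)-module, then equip it with a suitable pointwise seminorm, and finally apply the metric identification and completion procedures of Lemmas \ref{lem:metric_identification} and \ref{lem:metric_completion}. Concretely, I would start from the algebraic direct limit \(\varinjlim^{\rm alg}\mathscr M_\star\) described in Subsection \ref{ss:lim_mod}, together with its canonical \(L^0(\mm)\)-linear maps \(\varphi_i^{\rm alg}\colon\mathscr M_i\to\varinjlim^{\rm alg}\mathscr M_\star\). The decisive choice is the pointwise seminorm. Since every \(\varphi_{ij}\) is a morphism, the function \(\big|\varphi_{ij}(v)\big|\) is nonincreasing in \(j\) in the \(\mm\)-a.e.\ sense, so for \(\boldsymbol v\in\varinjlim^{\rm alg}\mathscr M_\star\) represented by some \(v\in\mathscr M_i\) I would set
\[|\boldsymbol v|\coloneqq{\rm ess\,inf}_{j\geq i}\,\big|\varphi_{ij}(v)\big|\in L^0(\mm).\]
Because \(\{j\in I:j\geq i\}\) is a directed set and the family above is nonincreasing, the essential infimum over a cofinal tail equals the essential infimum over the whole family; using this together with the identity \(\varphi_{ij}=\varphi_{kj}\circ\varphi_{ik}\) one checks that \(|\boldsymbol v|\) does not depend on the representative \(v\).

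Next I would verify the pointwise seminorm axioms. Nonnegativity is immediate, and homogeneity \(|f\cdot\boldsymbol v|=|f|\,|\boldsymbol v|\) follows by pulling the nonnegative factor \(|f|\) out of the essential infimum. The triangle inequality is the one step genuinely using the structure of \(I\): choosing representatives \(v,w\) of \(\boldsymbol v,\boldsymbol w\) at a common index \(k\), one has \(\big|\varphi_{kj}(v+w)\big|\leq\big|\varphi_{kj}(v)\big|+\big|\varphi_{kj}(w)\big|\) for every \(j\geq k\); since both summands form nonincreasing families along the same directed net, one can realise both infima simultaneously along a cofinal sequence, so the essential infimum of the sum equals the sum of the essential infima, yielding \(|\boldsymbol v+\boldsymbol w|\leq|\boldsymbol v|+|\boldsymbol w|\). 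Applying Lemma \ref{lem:metric_identification} promotes this seminorm to a pointwise norm on a quotient module, and Lemma \ref{lem:metric_completion} then produces a normed \(L^0(\mm)\)-module, which I define to be \(\varinjlim\mathscr M_\star\). The canonical morphisms \(\varphi_i\) arise by composing \(\varphi_i^{\rm alg}\) with the quotient and completion maps; they are morphisms because \(\big|\varphi_i^{\rm alg}(v)\big|\leq\big|\varphi_{ii}(v)\big|=|v|\).

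Finally I would establish the universal property. Property (a) (the target condition \(\varphi_j\circ\varphi_{ij}=\varphi_i\)) is inherited directly from the algebraic construction. For property (b), given any target \(\big(\mathscr N,\{\psi_i\}_{i\in I}\big)\) in the category, the algebraic universal property furnishes a unique \(L^0(\mm)\)-linear map \(\Phi^{\rm alg}\) on \(\varinjlim^{\rm alg}\mathscr M_\star\) with \(\Phi^{\rm alg}\circ\varphi_i^{\rm alg}=\psi_i\). The essential point is that \(\Phi^{\rm alg}\) does not increase the pointwise norm: writing \(\boldsymbol v=\varphi_i^{\rm alg}(v)=\varphi_j^{\rm alg}\big(\varphi_{ij}(v)\big)\) gives \(\Phi^{\rm alg}(\boldsymbol v)=\psi_j\big(\varphi_{ij}(v)\big)\) for every \(j\geq i\), whence \(\big|\Phi^{\rm alg}(\boldsymbol v)\big|\leq\big|\varphi_{ij}(v)\big|\), and taking the essential infimum over \(j\geq i\) yields \(\big|\Phi^{\rm alg}(\boldsymbol v)\big|\leq|\boldsymbol v|\). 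Consequently \(\Phi^{\rm alg}\) factors through the metric identification and, by completeness of \(\mathscr N\), extends uniquely to a morphism \(\Phi\colon\varinjlim\mathscr M_\star\to\mathscr N\) satisfying \(\Phi\circ\varphi_i=\psi_i\). Uniqueness follows since the images of the \(\varphi_i\) generate a \(\sfd_{\varinjlim\mathscr M_\star}\)-dense submodule by \eqref{eq:M_Alg_surj}, so a continuous morphism is determined by its restrictions to the \(\mathscr M_i\). The main obstacle throughout is identifying the correct pointwise norm as a monotone essential infimum and controlling its interaction with sums, which is precisely where the directedness of \(I\) enters.
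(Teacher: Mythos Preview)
Your proposal is correct and follows essentially the same route as the paper: algebraic direct limit, equip it with a pointwise seminorm, then apply Lemmas \ref{lem:metric_identification} and \ref{lem:metric_completion}, and finally verify the universal property via the algebraic one. The only cosmetic difference is the formulation of the seminorm: the paper writes \(|\boldsymbol v|\coloneqq{\rm ess\,inf}\big\{|v|:i\in I,\,v\in\mathscr M_i,\,\varphi'_i(v)=\boldsymbol v\big\}\), whereas you fix a representative and take \({\rm ess\,inf}_{j\geq i}\big|\varphi_{ij}(v)\big|\); these agree because every representative of \(\boldsymbol v\) eventually maps to a common element, and your monotone cofinal-sequence argument for the triangle inequality simply fills in the details that the paper leaves as ``readily checked''.
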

\begin{proof}
Since \(\big(\{\mathscr M_i\}_{i\in I},\{\varphi_{ij}\}_{i\leq j}\big)\)
is a direct system in the category of algebraic \(L^0(\mm)\)-modules,
we can consider its direct limit
\(\big(\mathscr M_{\rm Alg},\{\varphi'_i\}_{i\in I}\big)\) in such category
(cf.\ Subsection \ref{ss:lim_mod}). It can be readily checked that
the following formula defines a pointwise seminorm on \(\mathscr M_{\rm Alg}\):
\begin{equation}\label{eq:ptwse_norm_on_M_Alg}
|\boldsymbol v|\coloneqq{\rm ess\,\inf\,}\big\{|v|\;:
\;i\in I,\,v\in\mathscr M_i,\,\varphi'_i(v)=\boldsymbol v\big\}
\quad\text{ for every }\boldsymbol v\in\mathscr M_{\rm Alg}.
\end{equation}
Clearly \(|\boldsymbol v|\in L^0(\mm)\) for every \(\boldsymbol v\in\mathscr M_{\rm Alg}\)
by \eqref{eq:M_Alg_surj}. Consider the equivalence relation \(\sim\)
on \(\mathscr M_{\rm Alg}\) as in Lemma \ref{lem:metric_identification} and
the metric completion \(\big(\varinjlim\mathscr M_\star,\iota\big)\) of
\(\mathscr M_{\rm Alg}/\sim\) as in Lemma \ref{lem:metric_completion}.
For \(i\in I\) we set the map
\(\varphi_i\colon\mathscr M_i\to\varinjlim\mathscr M_\star\) as
\(\varphi_i(v)\coloneqq\iota\big[\varphi'_i(v)\big]_\sim\in\varinjlim\mathscr M_\star\)
for all \(v\in\mathscr M_i\). We claim that
\[\big(\varinjlim\mathscr M_\star,\{\varphi_i\}_{i\in I}\big)\text{ is the direct limit of }
\big(\{\mathscr M_i\}_{i\in I},\{\varphi_{ij}\}_{i\leq j}\big)
\text{ as normed }L^0(\mm)\text{-modules.}\]
First of all, we know that \(\varinjlim\mathscr M_\star\) is a normed \(L^0(\mm)\)-module
from Lemmata \ref{lem:metric_identification} and \ref{lem:metric_completion}.
Each \(\varphi_i\) is \(L^0(\mm)\)-linear as composition of \(L^0(\mm)\)-module
morphisms, while for every \(v\in\mathscr M_i\) it holds that
\[\big|\varphi_i(v)\big|=\big|\varphi'_i(v)\big|
\overset{\eqref{eq:ptwse_norm_on_M_Alg}}\leq|v|
\quad\text{ in the }\mm\text{-a.e.\ sense,}\]
thus proving that \(\varphi_i\) is a normed \(L^0(\mm)\)-module morphism.
Given that \(\varphi'_j\circ\varphi_{ij}=\varphi'_i\) for all \(i,j\in I\)
with \(i\leq j\), we immediately deduce that \(\varphi_j\circ\varphi_{ij}=\varphi_i\)
as well. Therefore it only remains to prove the universal property:
let \(\big(\mathscr N,\{\psi_i\}_{i\in I}\big)\) be any given target.
It is a target even in the category of algebraic \(L^0(\mm)\)-modules,
therefore there exists a unique \(L^0(\mm)\)-morphism
\(\Phi'\colon\mathscr M_{\rm Alg}\to\mathscr N\) such that
\(\Phi'\circ\varphi'_i=\psi_i\) for every \(i\in I\). Then we are forced
to define the map \(\Phi\colon\iota(\mathscr M_{\rm Alg}/\sim)\to\mathscr N\) as
\begin{equation}\label{eq:DL_nor_mod_def_Phi}
\Phi\big(\iota[\boldsymbol v]_\sim\big)\coloneqq\Phi'(\boldsymbol v)
\quad\text{ for every }\boldsymbol v\in\mathscr M_{\rm Alg}.
\end{equation}
Observe that for any \(\boldsymbol v\in\mathscr M_{\rm Alg}\) we have
\[\big|\Phi'(\boldsymbol v)\big|=\big|\psi_i(v)\big|\leq|v|
\;\;\;\mm\text{-a.e.}\quad\text{ for every }i\in I\text{ and }
v\in\mathscr M_i\text{ with }\varphi'_i(v)=\boldsymbol v,\]
thus accordingly \(\big|\Phi'(\boldsymbol v)\big|\leq|\boldsymbol v|\)
holds \(\mm\)-a.e.\ in \(\X\). This grants that the operator \(\Phi\) in
\eqref{eq:DL_nor_mod_def_Phi} is well-defined and can be uniquely
extended to a normed \(L^0(\mm)\)-module morphism
\(\Phi\colon\varinjlim\mathscr M_\star\to\mathscr N\).
This proves the universal property and concludes the proof of the statement.
\end{proof}
\begin{remark}\label{rmk:DL_separable}{\rm
It follows from the proof of Theorem \ref{thm:DL_nor_mod} that
\(\bigcup_{i\in I}\varphi_i(\mathscr M_i)\) is
dense in \(\varinjlim\mathscr M_\star\).
In particular, if \(I\) is countable and each \(\mathscr M_i\) is separable,
then \(\varinjlim\mathscr M_\star\) is separable as well.
\fr}\end{remark}
\begin{definition}[Morphism of direct systems of normed \(L^0(\mm)\)-modules]
A morphism \(\Theta\) between two direct systems
\(\big(\{\mathscr M_i\}_{i\in I},\{\varphi_{ij}\}_{i\leq j}\big)\) and
\(\big(\{\mathscr N_i\}_{i\in I},\{\psi_{ij}\}_{i\leq j}\big)\) of normed
\(L^0(\mm)\)-modules is a family \(\Theta=\{\theta_i\}_{i\in I}\) of
normed \(L^0(\mm)\)-module morphisms \(\theta_i\colon\mathscr M_i\to\mathscr N_i\)
such that
\[\begin{tikzcd}
\mathscr M_i \arrow[r,"\theta_i"] \arrow[d,swap,"\varphi_{ij}"] &
\mathscr N_i \arrow[d,"\psi_{ij}"] \\
\mathscr M_j \arrow[r,swap,"\theta_j"] & \mathscr N_j
\end{tikzcd}\]
is a commutative diagram for every \(i,j\in I\) with \(i\leq j\).
\end{definition}

With the notion of morphism just introduced, it makes sense to consider
the category of direct systems of normed \(L^0(\mm)\)-modules. Then the
correspondence sending a direct system of normed \(L^0(\mm)\)-modules
to its direct limit can be made into a functor, as shown by the following result.
\begin{theorem}[The direct limit functor \(\varinjlim\)]
Let \(\Theta=\{\theta_i\}_{i\in I}\) be a morphism between two direct
systems \(\big(\{\mathscr M_i\}_{i\in I},\{\varphi_{ij}\}_{i\leq j}\big)\)
and \(\big(\{\mathscr N_i\}_{i\in I},\{\psi_{ij}\}_{i\leq j}\big)\) of
normed \(L^0(\mm)\)-modules, whose direct limits are denoted by
\(\big(\varinjlim\mathscr M_\star,\{\varphi_i\}_{i\in I}\big)\) and
\(\big(\varinjlim\mathscr N_\star,\{\psi_i\}_{i\in I}\big)\), respectively.
Then there exists a unique normed \(L^0(\mm)\)-module morphism
\(\varinjlim\theta_\star\colon\varinjlim\mathscr M_\star\to\varinjlim\mathscr N_\star\)
such that the diagram
\begin{equation}\label{eq:diagram_DL_morph}\begin{tikzcd}
\mathscr M_i \arrow[r,"\theta_i"] \arrow[d,swap,"\varphi_i"] &
\mathscr N_i \arrow[d,"\psi_i"] \\
\varinjlim\mathscr M_\star \arrow[r,swap,"\varinjlim\theta_\star"] &
\varinjlim\mathscr N_\star
\end{tikzcd}\end{equation}
commutes for every \(i\in I\). In particular, the
correspondence \(\varinjlim\) is a covariant functor
from the category of direct systems of normed \(L^0(\mm)\)-modules to
the category of normed \(L^0(\mm)\)-modules.
\end{theorem}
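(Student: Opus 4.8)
The plan is to obtain $\varinjlim\theta_\star$ as the unique factorisation furnished by the universal property of the direct limit $\varinjlim\mathscr M_\star$. First I would introduce the auxiliary maps
\[\chi_i\coloneqq\psi_i\circ\theta_i\colon\mathscr M_i\to\varinjlim\mathscr N_\star\quad\text{ for every }i\in I.\]
Each $\chi_i$ is a morphism of normed $L^0(\mm)$-modules, being the composition of the two morphisms $\theta_i$ and $\psi_i$; in particular it is $L^0(\mm)$-linear and satisfies $\big|\chi_i(v)\big|\leq\big|\theta_i(v)\big|\leq|v|$ in the $\mm$-a.e.\ sense for every $v\in\mathscr M_i$.

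Next I would verify that $\big(\varinjlim\mathscr N_\star,\{\chi_i\}_{i\in I}\big)$ is a \emph{target} for the direct system $\big(\{\mathscr M_i\}_{i\in I},\{\varphi_{ij}\}_{i\leq j}\big)$. Fixing $i,j\in I$ with $i\leq j$, I would compute
\[\chi_j\circ\varphi_{ij}=\psi_j\circ\theta_j\circ\varphi_{ij}=\psi_j\circ\psi_{ij}\circ\theta_i=\psi_i\circ\theta_i=\chi_i,\]
where the second equality uses the commutativity of the morphism-of-systems diagram (namely $\theta_j\circ\varphi_{ij}=\psi_{ij}\circ\theta_i$) and the third uses the target property of $\big(\varinjlim\mathscr N_\star,\{\psi_i\}_{i\in I}\big)$ for the $\mathscr N$-system (namely $\psi_j\circ\psi_{ij}=\psi_i$). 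By the universal property of $\varinjlim\mathscr M_\star$, there is then a \emph{unique} morphism $\Phi\colon\varinjlim\mathscr M_\star\to\varinjlim\mathscr N_\star$ satisfying $\Phi\circ\varphi_i=\chi_i=\psi_i\circ\theta_i$ for every $i\in I$. Setting $\varinjlim\theta_\star\coloneqq\Phi$ yields exactly the commutativity of the diagram \eqref{eq:diagram_DL_morph}, while the uniqueness of $\varinjlim\theta_\star$ is precisely the uniqueness asserted in that universal property.

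For the functoriality statement I would argue once more by uniqueness. When $\Theta$ is the identity morphism of a direct system, i.e.\ $\theta_i=\mathrm{id}_{\mathscr M_i}$ for all $i$, the canonical family $\{\varphi_i\}_{i\in I}$ itself satisfies the defining relation, so uniqueness forces $\varinjlim\theta_\star=\mathrm{id}_{\varinjlim\mathscr M_\star}$. Given a further morphism $\Xi=\{\xi_i\}_{i\in I}$ from $\big(\{\mathscr N_i\}_{i\in I},\{\psi_{ij}\}_{i\leq j}\big)$ to a third direct system with direct limit $\big(\varinjlim\mathscr P_\star,\{\pi_i\}_{i\in I}\big)$, both $\varinjlim\xi_\star\circ\varinjlim\theta_\star$ and $\varinjlim(\xi_\star\circ\theta_\star)$ satisfy the factorisation relation associated with the composite morphism $\Xi\circ\Theta=\{\xi_i\circ\theta_i\}_{i\in I}$, whence they coincide. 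This establishes that $\varinjlim$ is a covariant functor. I do not anticipate any genuine obstacle: the whole argument is a formal consequence of the universal property, and the only point requiring attention is the verification that the maps $\chi_i$ assemble into a target, which reduces to the two commuting-square identities displayed above.
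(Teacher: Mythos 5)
Your proof is correct, but it takes a genuinely different route from the paper's. The paper argues constructively: it first builds a morphism $\theta'\colon\mathscr M_{\rm Alg}\to\mathscr N_{\rm Alg}$ between the \emph{algebraic} direct limits using the surjectivity property \eqref{eq:M_Alg_surj}, checks the pointwise-norm inequality $\big|\theta'(\boldsymbol v)\big|\leq|\boldsymbol v|$ via the explicit formula \eqref{eq:ptwse_norm_on_M_Alg}, and then pushes $\theta'$ through the metric identification and completion to obtain $\varinjlim\theta_\star$ on the concrete model constructed in Theorem \ref{thm:DL_nor_mod}. You instead never open the black box: you observe that $\chi_i\coloneqq\psi_i\circ\theta_i$ makes $\big(\varinjlim\mathscr N_\star,\{\chi_i\}_{i\in I}\big)$ a target for the $\mathscr M$-system (your verification of $\chi_j\circ\varphi_{ij}=\chi_i$ is exactly right, and compositions of morphisms are morphisms since $\big|\psi_i(\theta_i(v))\big|\leq\big|\theta_i(v)\big|\leq|v|$ $\mm$-a.e.), and then invoke the universal property of $\varinjlim\mathscr M_\star$, which delivers existence and uniqueness in one stroke. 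Your argument is shorter, purely formal, valid verbatim in any category admitting direct limits over $I$, and manifestly independent of which realisation of the limits one chooses; you also spell out the preservation of identities and composites, which the paper leaves implicit in its ``in particular'' clause. What the paper's construction buys in exchange is explicit information about how $\varinjlim\theta_\star$ acts on the dense submodule $\bigcup_{i\in I}\varphi_i(\mathscr M_i)$ together with the quantitative pointwise-norm bound at the level of $\mathscr M_{\rm Alg}$, which is the kind of concrete description that the rest of the paper (e.g.\ Remark \ref{rmk:DL_separable} and Proposition \ref{prop:DL_preserves_surj}) tends to exploit; but for the statement as given, your categorical argument is complete.
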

\begin{proof}
Let us denote by \(\big(\mathscr M_{\rm Alg},\{\varphi'_i\}_{i\in I}\big)\)
and \(\big(\mathscr N_{\rm Alg},\{\psi'_i\}_{i\in I}\big)\) the direct
limits (in the category of algebraic \(L^0(\mm)\)-modules) of
\(\big(\{\mathscr M_i\}_{i\in I},\{\varphi_{ij}\}_{i\leq j}\big)\) and
\(\big(\{\mathscr N_i\}_{i\in I},\{\psi_{ij}\}_{i\leq j}\big)\), respectively.
We define the map \(\theta'\colon\mathscr M_{\rm Alg}\to\mathscr N_{\rm Alg}\)
as follows: given any \(\boldsymbol v\in\mathscr M_{\rm Alg}\), there exist
\(i\in I\) and \(v\in\mathscr M_i\) such that \(\varphi'_i(v)=\boldsymbol v\)
by \eqref{eq:M_Alg_surj}, thus we set
\(\theta'(\boldsymbol v)\coloneqq(\psi'_i\circ\theta_i)(v)\).
It is straightforward to verify that \(\theta'\) is well-defined and is
the unique \(L^0(\mm)\)-module morphism such that
\(\theta'\circ\varphi'_i=\psi'_i\circ\theta_i\) for every \(i\in I\). As in the
proof of Theorem \ref{thm:DL_nor_mod}, let us consider the dense-range operators
\(\iota\colon\mathscr M_{\rm Alg}/\sim\,\to\varinjlim\mathscr M_\star\)
and \(\iota\colon\mathscr N_{\rm Alg}/\sim\,\to\varinjlim\mathscr N_\star\)
given by Lemmata \ref{lem:metric_identification} and \ref{lem:metric_completion}.
It can be readily checked that there exists a unique \(L^0(\mm)\)-module morphism
\(\theta\colon\iota(\mathscr M_{\rm Alg}/\sim)\to\iota(\mathscr N_{\rm Alg}/\sim)\)
such that
\[\theta\big(\iota[\boldsymbol v]_\sim\big)=
\iota\big[\theta'(\boldsymbol v)\big]_\sim
\quad\text{ for every }\boldsymbol v\in\mathscr M_{\rm Alg}.\]
Its well-posedness is granted by the \(\mm\)-a.e.\ inequality
\(\big|\iota\big[\theta'(\boldsymbol v)\big]_\sim\big|\leq
\big|\iota[\boldsymbol v]_\sim\big|\), which is satisfied for every
\(\boldsymbol v\in\mathscr M_{\rm Alg}\) as a consequence of the following observation:
\[\big|\iota\big[\theta'(\boldsymbol v)\big]_\sim\big|
=\big|\theta'(\boldsymbol v)\big|=\big|(\psi'_i\circ\theta_i)(v)\big|
\overset{\eqref{eq:ptwse_norm_on_M_Alg}}\leq\big|\theta_i(v)\big|\leq|v|
\quad\text{ holds }\mm\text{-a.e.}\]
for every \(i\in I\) and \(v\in\mathscr M_i\) such that \(\varphi'_i(v)=\boldsymbol v\),
whence \(\big|\iota\big[\theta'(\boldsymbol v)\big]_\sim\big|\leq|\boldsymbol v|
=\big|\iota[\boldsymbol v]_\sim\big|\) holds \(\mm\)-a.e.\ again by
\eqref{eq:ptwse_norm_on_M_Alg}. This also ensures that \(\theta\) can be uniquely
extended to a normed \(L^0(\mm)\)-module morphism
\(\varinjlim\theta_\star\colon\varinjlim\mathscr M_\star\to\varinjlim\mathscr N_\star\),
which is the unique morphism satisfying \(\varinjlim\theta_\star\circ\varphi_i=
\psi_i\circ\theta_i\) for every \(i\in I\). This concludes the proof of the statement.
\end{proof}
\subsection{Main properties}
In this subsection we collect the most important properties
of direct limits of normed \(L^0(\mm)\)-modules.
\begin{lemma}\label{lem:repr_mod}
Any normed \(L^0(\mm)\)-module can be written as direct limit of some
direct system of finitely-generated normed \(L^0(\mm)\)-modules.
\end{lemma}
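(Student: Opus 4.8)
The plan is to realise an arbitrary normed \(L^0(\mm)\)-module \(\mathscr M\) as the direct limit of its finitely-generated submodules, ordered by inclusion. I would let \(I\) be the collection of all finite subsets of \(\mathscr M\), which is directed by inclusion since \(F\cup G\in I\) is an upper bound of any \(F,G\in I\). To each \(F\in I\) I associate the module \(\mathscr M_F\) obtained as the \(\sfd_{\mathscr M}\)-closure of the \(L^0(\mm)\)-submodule of \(\mathscr M\) generated by \(F\). Since the module operations are \(\sfd_{\mathscr M}\)-continuous, \(\mathscr M_F\) is a closed \(L^0(\mm)\)-submodule of \(\mathscr M\); being a closed subset of the complete space \((\mathscr M,\sfd_{\mathscr M})\) it is itself complete, hence a normed \(L^0(\mm)\)-module, and it is finitely generated by \(F\) by construction. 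For \(F\subseteq G\) the inclusion \(\varphi_{FG}\colon\mathscr M_F\hookrightarrow\mathscr M_G\) preserves the pointwise norm, so it is a morphism, and these inclusions clearly satisfy \(\varphi_{FF}=\mathrm{id}\) and \(\varphi_{FH}=\varphi_{GH}\circ\varphi_{FG}\); thus \(\big(\{\mathscr M_F\}_{F\in I},\{\varphi_{FG}\}_{F\subseteq G}\big)\) is a direct system of finitely-generated normed \(L^0(\mm)\)-modules.

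Denoting by \(\iota_F\colon\mathscr M_F\hookrightarrow\mathscr M\) the inclusion, I would then prove that \(\big(\mathscr M,\{\iota_F\}_{F\in I}\big)\) is the direct limit of this system, whence the claim follows from the uniqueness (up to isomorphism) of direct limits recalled above. That \(\big(\mathscr M,\{\iota_F\}_{F\in I}\big)\) is a target is immediate, since \(\iota_G\circ\varphi_{FG}=\iota_F\) holds as an identity of inclusions for every \(F\subseteq G\). For the universal property, given any target \(\big(\mathscr N,\{\psi_F\}_{F\in I}\big)\), I would define \(\Phi\colon\mathscr M\to\mathscr N\) by \(\Phi(v)\coloneqq\psi_F(v)\), where \(F\in I\) is any finite set with \(v\in\mathscr M_F\) (for instance \(F=\{v\}\)).

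The main point to verify is that \(\Phi\) is well-posed, and this is exactly where the compatibility of the target enters: if \(v\in\mathscr M_F\cap\mathscr M_G\) then, setting \(H\coloneqq F\cup G\), the relations \(\psi_F=\psi_H\circ\varphi_{FH}\) and \(\psi_G=\psi_H\circ\varphi_{GH}\) force \(\psi_F(v)=\psi_H(v)=\psi_G(v)\), so the definition does not depend on the chosen \(F\). Once well-posedness is granted, \(L^0(\mm)\)-linearity of \(\Phi\) follows by evaluating \(v\), \(w\) and \(f\cdot v+g\cdot w\) all inside \(\mathscr M_{\{v,w\}}\) and using linearity of \(\psi_{\{v,w\}}\), while \(\big|\Phi(v)\big|=\big|\psi_F(v)\big|\leq|v|\) in the \(\mm\)-a.e.\ sense shows that \(\Phi\) is a morphism. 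The identity \(\Phi\circ\iota_F=\psi_F\) is clear from the definition, and uniqueness is immediate: any morphism \(\Phi'\) with \(\Phi'\circ\iota_F=\psi_F\) must satisfy \(\Phi'(v)=\Phi'\big(\iota_{\{v\}}(v)\big)=\psi_{\{v\}}(v)=\Phi(v)\) for every \(v\in\mathscr M\). This establishes the universal property, and hence the lemma. The only genuinely delicate verification is the well-posedness of \(\Phi\); everything else is a routine check of the module and morphism axioms.
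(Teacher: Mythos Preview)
Your proof is correct and follows essentially the same approach as the paper: you take the directed set of finite subsets of \(\mathscr M\), associate to each the submodule it generates, and verify directly that \(\mathscr M\) together with the inclusion maps satisfies the universal property of the direct limit. The paper's version is phrased with a bit more flexibility---it allows an arbitrary generating set \(D\subseteq\mathscr M\) and an arbitrary directed \(I\subseteq\mathcal P_F(D)\) whose union generates \(\mathscr M\)---which is then exploited in the ensuing corollary to obtain a \emph{countable} index set when \(\mathscr M\) is separable; your choice \(D=\mathscr M\), \(I=\mathcal P_F(\mathscr M)\) is the special case the paper explicitly mentions.
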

\begin{proof}
Let \(\mathscr M\) be a normed \(L^0(\mm)\)-module.
Choose any set \(D\) that generates \(\mathscr M\) (possibly
\(D=\mathscr M\)). We denote by \(\mathcal P_F(D)\) the family
of all finite subsets of \(D\). Now choose any subset \(I\) of
\(\mathcal P_F(D)\) that is a directed partially ordered set
with respect to the inclusion relation \(\subseteq\) and such
that \(\bigcup_{F\in I}F\) generates \(\mathscr M\)
(for instance, \(\mathcal P_F(D)\) itself satisfies these properties).
Then let us define
\[\begin{split}
&\mathscr M_F\coloneqq\text{submodule of }\mathscr M\text{ generated by }F,\\
&\iota_{FG}\colon\mathscr M_F\hookrightarrow\mathscr M_G
\text{ inclusion map,}
\end{split}\]
for every \(F,G\in I\) with \(F\subseteq G\). It is then clear that
\(\big(\{\mathscr M_F\}_{F\in I},\{\iota_{FG}\}_{F\subseteq G}\big)\)
is a direct system of (finitely-generated) normed \(L^0(\mm)\)-modules.
We claim that
\begin{equation}\label{eq:repr_mod_claim}
\mathscr M\cong\varinjlim\mathscr M_\star,
\end{equation}
the canonical morphisms being given by the inclusion maps
\(\iota_F\colon\mathscr M_F\hookrightarrow\mathscr M\). First,
\(\big(\mathscr M,\{\iota_F\}_{F\in I}\big)\) is obviously
a target. To prove the universal property, fix another
target \(\big(\mathscr N,\{\psi_F\}_{F\in I}\big)\). Notice that the
\(L^0(\mm)\)-module \(\bigcup_{F\in I}\mathscr M_F\) is dense in
\(\mathscr M\) by construction. Therefore it can be readily checked
that there is a unique morphism \(\Phi\colon\mathscr M\to\mathscr N\)
such that \(\Phi(v)=\psi_F(v)\) holds for all \(F\in I\) and \(v\in\mathscr M_F\),
or equivalently \(\Phi\circ\iota_F=\psi_F\) for every \(F\in I\).
This proves the claim \eqref{eq:repr_mod_claim}.
\end{proof}
\begin{corollary}\label{cor:DL_sep_NM}
Let \(\mathscr M\) be a separable normed \(L^0(\mm)\)-module. Let \((v_n)_{n\in\N}\)
be a countable dense subset of \(\mathscr M\). Given any \(n,m\in\N\) with \(n\leq m\),
let us define:
\begin{itemize}
\item[\(\rm i)\)] \(\mathscr M_n\) as the module generated by \(\{v_1,\ldots,v_n\}\),
\item[\(\rm ii)\)] \(\iota_n\colon\mathscr M_n\hookrightarrow\mathscr M\) and
\(\iota_{nm}\colon\mathscr M_n\hookrightarrow\mathscr M_m\) as the inclusion maps.
\end{itemize}
Then it holds that \(\big(\mathscr M,\{\iota_n\}_{n\in\N}\big)\) is the direct limit
of the direct system \(\big(\{\mathscr M_n\}_{n\in\N},\{\iota_{nm}\}_{n\leq m}\big)\).
\end{corollary}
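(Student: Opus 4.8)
The plan is to read this statement off as the special case of Lemma~\ref{lem:repr_mod} in which the generating set is countable and the directed index family is a chain. Concretely, I would take $D:=\{v_n:n\in\N\}$ as the chosen generating set: since $(v_n)_{n\in\N}$ is dense in $\mathscr M$ by hypothesis, the set $D$ is itself $\sfd_{\mathscr M}$-dense, hence in particular it generates $\mathscr M$ in the sense of the Generators definition. As the directed subset of $\mathcal P_F(D)$ I would choose $I:=\{F_n:n\in\N\}$ with $F_n:=\{v_1,\ldots,v_n\}$; this is a chain for inclusion, so it is directed, and $\bigcup_{n\in\N}F_n=D$ generates $\mathscr M$. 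Under the order-preserving cofinal reindexing $n\mapsto F_n$, the finitely generated submodules and inclusion morphisms supplied by Lemma~\ref{lem:repr_mod} are exactly the $\mathscr M_n$, $\iota_{nm}$ and $\iota_n$ of the statement, so the isomorphism $\mathscr M\cong\varinjlim\mathscr M_\star$ produced there is precisely the assertion to be proved.

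To keep the argument self-contained I would nonetheless rerun the short universal-property check directly over the index set $\N$, mirroring the proof of Lemma~\ref{lem:repr_mod}. First, $\big(\mathscr M,\{\iota_n\}_{n\in\N}\big)$ is a target, because $\iota_m\circ\iota_{nm}=\iota_n$ holds trivially for composites of inclusions whenever $n\leq m$. Next, given an arbitrary target $\big(\mathscr N,\{\psi_n\}_{n\in\N}\big)$, I would define a map $\Phi$ on the increasing union $\bigcup_{n\in\N}\mathscr M_n$ (which is an $L^0(\mm)$-submodule of $\mathscr M$, being an increasing union of submodules) by $\Phi(v):=\psi_n(v)$ whenever $v\in\mathscr M_n$. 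Well-posedness is immediate from the target compatibility $\psi_n=\psi_m\circ\iota_{nm}$ together with $\iota_{nm}(v)=v$, and the estimate $\big|\Phi(v)\big|=\big|\psi_n(v)\big|\leq|v|$ shows that $\Phi$ respects the pointwise norm, so it is in particular $\sfd_{\mathscr M}$-uniformly continuous.

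The only step carrying genuine content---and thus the closest thing to an obstacle---is the passage from $\bigcup_{n\in\N}\mathscr M_n$ to all of $\mathscr M$. For this I would invoke that $\bigcup_{n\in\N}\mathscr M_n$ is $\sfd_{\mathscr M}$-dense in $\mathscr M$: each $v_n$ belongs to $\mathscr M_n$, whence the dense set $\{v_n:n\in\N\}$ is contained in the union. Density together with the completeness of $\mathscr N$ then allows me to extend $\Phi$ uniquely to a normed $L^0(\mm)$-module morphism $\Phi\colon\mathscr M\to\mathscr N$ with $\Phi\circ\iota_n=\psi_n$ for every $n\in\N$, exactly as in the completion argument of Lemma~\ref{lem:metric_completion}; uniqueness of such a $\Phi$ is forced by the same density. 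This yields the universal property and hence the desired identification of $\big(\mathscr M,\{\iota_n\}_{n\in\N}\big)$ with the direct limit.
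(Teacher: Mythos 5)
Your proposal is correct and takes essentially the same route as the paper: the paper's proof likewise observes that \(D=(v_n)_{n\in\N}\) generates \(\mathscr M\) and then invokes the proof of Lemma~\ref{lem:repr_mod} with \(I\) chosen as the chain of initial segments \(\{v_1,\ldots,v_n\}\). Your explicit re-run of the target check, the definition of \(\Phi\) on the dense union \(\bigcup_{n\in\N}\mathscr M_n\), and the extension by completeness merely unpacks what the lemma's proof leaves as ``readily checked''.
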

\begin{proof}
Notice that \(D\coloneqq(v_n)_{n\in\N}\) generates \(\mathscr M\).
Then the statement follows from the proof of Lemma \ref{lem:repr_mod} by
choosing as \(I\) the family of all subsets of \(D\) of the form
\(\{v_1,\ldots,v_n\}\) with \(n\in\N\).
\end{proof}

The category of normed \(L^0(\mm)\)-modules is a pointed category,
its zero object being the trivial space \(\{0\}\). Given two normed
\(L^0(\mm)\)-modules \(\mathscr M\), \(\mathscr N\) and a morphism
\(\varphi\colon\mathscr M\to\mathscr N\), it holds that:
\begin{itemize}
\item[\(\rm i)\)] The kernel of \(\varphi\) is the normed \(L^0(\mm)\)-submodule
\({\rm ker}(\varphi)\coloneqq\big\{v\in\mathscr M\,:\,\varphi(v)=0\big\}\)
of \(\mathscr M\) (together with the inclusion map
\({\rm ker}(\varphi)\hookrightarrow\mathscr M\)).
\item[\(\rm ii)\)] The image of \(\varphi\) is the normed \(L^0(\mm)\)-submodule
\({\rm im}(\varphi)\) of \(\mathscr N\) generated by the set-theoretic range
\(\varphi(\mathscr M)\) of \(\varphi\)
(together with the inclusion map \({\rm im}(\varphi)\hookrightarrow\mathscr N\)).
Observe that \(\varphi(\mathscr M)\) is an \(L^0(\mm)\)-submodule of
\(\mathscr N\), thus \({\rm im}(\varphi)\) coincides with the closure
of \(\varphi(\mathscr M)\) in \(\mathscr N\).
\end{itemize}
\begin{remark}{\rm
In general, the set-theoretic range of a normed \(L^0(\mm)\)-module
morphism might be not complete. For instance, consider the Banach spaces
\(\ell^\infty\) and \(c_0\), which can be regarded as normed
\(L^0(\mm)\)-modules provided the measure \(\mm\) is a Dirac delta
(as pointed out in Example \ref{ex:Banach_are_NM}). The linear
contraction \(\varphi\colon\ell^\infty\to c_0\), defined as
\[\varphi\big((t_k)_{k\in\N}\big)\coloneqq(t_k/k)_{k\in\N}
\quad\text{ for every }(t_k)_{k\in\N}\in\ell^\infty,\]
is injective and its range \(\varphi(\ell^\infty)\) is dense in \(c_0\).
The latter is granted by the following fact: the space \(c_{00}\)
(i.e.\ the space of all real-valued sequences having finitely many
non-zero terms) is dense in \(c_0\) and is contained in \(\varphi(\ell^\infty)\).
On the other hand, the operator \(\varphi\) cannot be surjective, as \(c_0\)
is separable while \(\ell^\infty\) is not. Therefore the normed space
\(\varphi(\ell^\infty)\) is not complete.
\fr}\end{remark}

The category of direct systems of normed \(L^0(\mm)\)-modules
is a pointed category, whose zero object is the direct system
\(\big(\{\mathscr M_i\}_{i\in I},\{\varphi_{ij}\}_{i\leq j}\big)\)
given by \(\mathscr M_i\coloneqq\{0\}\) for all \(i\in I\) and \(\varphi_{ij}\coloneqq 0\)
for all \(i,j\in I\) with \(i\leq j\). Given a morphism \(\Theta=\{\theta_i\}_{i\in I}\)
of two direct systems \(\big(\{\mathscr M_i\}_{i\in I},\{\varphi_{ij}\}_{i\leq j}\big)\)
and \(\big(\{\mathscr N_i\}_{i\in I},\{\psi_{ij}\}_{i\leq j}\big)\) of normed
\(L^0(\mm)\)-modules, it holds that:
\begin{itemize}
\item[\(\rm a)\)] The kernel \({\rm ker}(\Theta)\) of \(\Theta\) is given by
\(\big(\big\{{\rm ker}(\theta_i)\big\}_{i\in I},
\big\{\varphi_{ij}\restr{{\rm ker}(\theta_i)}\big\}_{i\leq j}\big)\).
\item[\(\rm b)\)] The image \({\rm im}(\Theta)\) of \(\Theta\) is given by
\(\big(\big\{{\rm im}(\theta_i)\big\}_{i\in I},
\big\{\psi_{ij}\restr{{\rm im}(\theta_i)}\big\}_{i\leq j}\big)\).
\end{itemize}
Items a) and b) make sense, since
\(\varphi_{ij}\big({\rm ker}(\theta_i)\big)\subseteq{\rm ker}(\theta_j)\)
and \(\psi_{ij}\big({\rm im}(\theta_i)\big)\subseteq{\rm im}(\theta_j)\)
whenever \(i\leq j\).
\begin{proposition}\label{prop:DL_preserves_surj}
Let \(\Theta=\{\theta_i\}_{i\in I}\) be a morphism between two direct systems
\(\big(\{\mathscr M_i\}_{i\in I},\{\varphi_{ij}\}_{i\leq j}\big)\) and
\(\big(\{\mathscr N_i\}_{i\in I},\{\psi_{ij}\}_{i\leq j}\big)\)
of normed \(L^0(\mm)\)-modules such that
\({\rm im}(\Theta)=\big(\{\mathscr N_i\}_{i\in I},\{\psi_{ij}\}_{i\leq j}\big)\).
Then it holds that \({\rm im}\big(\varinjlim\theta_\star\big)=\varinjlim\mathscr N_\star\).
\end{proposition}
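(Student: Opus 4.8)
The plan is to unwind the hypothesis into a density statement and then propagate it through the canonical morphisms. First I would observe that the assumption \({\rm im}(\Theta)=\big(\{\mathscr N_i\}_{i\in I},\{\psi_{ij}\}_{i\leq j}\big)\) means precisely that \({\rm im}(\theta_i)=\mathscr N_i\) for every \(i\in I\). Since \({\rm im}(\theta_i)\) is by definition the closure of the set-theoretic range \(\theta_i(\mathscr M_i)\) inside \(\mathscr N_i\), this is equivalent to saying that \(\theta_i(\mathscr M_i)\) is \(\sfd_{\mathscr N_i}\)-dense in \(\mathscr N_i\) for each \(i\). I would also recall that the set-theoretic range of \(\varinjlim\theta_\star\) is an \(L^0(\mm)\)-submodule of \(\varinjlim\mathscr N_\star\), being the image of an \(L^0(\mm)\)-linear map; hence \({\rm im}\big(\varinjlim\theta_\star\big)\) coincides with the closure of this range, and it suffices to prove that such range is dense in \(\varinjlim\mathscr N_\star\).

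Next I would exploit the commutativity of diagram \eqref{eq:diagram_DL_morph}, which yields \(\varinjlim\theta_\star\circ\varphi_i=\psi_i\circ\theta_i\) for every \(i\in I\). In particular, for each fixed \(i\) the range of \(\varinjlim\theta_\star\) contains the set \(\psi_i\big(\theta_i(\mathscr M_i)\big)\). Using the continuity of \(\psi_i\) together with the density of \(\theta_i(\mathscr M_i)\) in \(\mathscr N_i\) established in the previous step, I obtain
\[
\psi_i(\mathscr N_i)=\psi_i\big(\overline{\theta_i(\mathscr M_i)}\big)
\subseteq\overline{\psi_i\big(\theta_i(\mathscr M_i)\big)}
\subseteq\overline{\big(\varinjlim\theta_\star\big)\big(\varinjlim\mathscr M_\star\big)},
\]
where the first inclusion is the continuity of \(\psi_i\) and the second follows from the identity \(\psi_i\big(\theta_i(\mathscr M_i)\big)=\big(\varinjlim\theta_\star\big)\big(\varphi_i(\mathscr M_i)\big)\). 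Finally, since \(\bigcup_{i\in I}\psi_i(\mathscr N_i)\) is dense in \(\varinjlim\mathscr N_\star\) by Remark \ref{rmk:DL_separable} applied to the target system, taking the union over \(i\in I\) and passing to the closure gives \(\varinjlim\mathscr N_\star\subseteq\overline{\big(\varinjlim\theta_\star\big)\big(\varinjlim\mathscr M_\star\big)}={\rm im}\big(\varinjlim\theta_\star\big)\), which is the desired conclusion.

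I expect no serious analytic obstacle here: completeness is never invoked beyond the very definition of the image as a closure, and the argument is a routine chaining of continuity with density. The only delicate point is the conceptual one at the outset, namely recognising that the hypothesis \({\rm im}(\Theta)=(\{\mathscr N_i\},\{\psi_{ij}\})\) is a \emph{density} condition on each \(\theta_i\) rather than a surjectivity condition; once this is correctly interpreted, the closure inclusions above close the argument.
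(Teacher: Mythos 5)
Your proof is correct and takes essentially the same route as the paper's: both interpret the hypothesis \({\rm im}(\theta_i)=\mathscr N_i\) as density of \(\theta_i(\mathscr M_i)\) in \(\mathscr N_i\), combine it via the commutative diagram \eqref{eq:diagram_DL_morph} with the density of \(\bigcup_{i\in I}\psi_i(\mathscr N_i)\) in \(\varinjlim\mathscr N_\star\) from Remark \ref{rmk:DL_separable}, and conclude that the range of \(\varinjlim\theta_\star\) is dense, hence has closure equal to \(\varinjlim\mathscr N_\star\). Your write-up merely makes explicit the continuity step \(\psi_i\bigl(\overline{\theta_i(\mathscr M_i)}\bigr)\subseteq\overline{\psi_i\bigl(\theta_i(\mathscr M_i)\bigr)}\) that the paper leaves implicit.
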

\begin{proof}
First of all, we know that:
\begin{itemize}
\item \(\theta_i(\mathscr M_i)\) is dense in \(\mathscr N_i\) for every \(i\in I\),
as \({\rm im}(\theta_i)=\mathscr N_i\) by assumption.
\item \(\bigcup_{i\in I}\psi_i(\mathscr N_i)\) is dense in \(\varinjlim\mathscr N_\star\)
by Remark \ref{rmk:DL_separable}.
\end{itemize}
Hence \(\bigcup_{i\in I}(\theta\circ\varphi_i)(\mathscr M_i)\) is dense in
\(\varinjlim\mathscr N_\star\) by \eqref{eq:diagram_DL_morph}, where \(\theta\)
stands for \(\varinjlim\theta_\star\). This ensures that the set
\(\theta\big(\varinjlim\mathscr M_\star\big)\supseteq
\bigcup_{i\in I}(\theta\circ\varphi_i)(\mathscr M_i)\) is dense in
\(\varinjlim\mathscr N_\star\) as well, thus getting the statement.
\end{proof}
\begin{remark}{\rm
The dual statement of that of Proposition \ref{prop:DL_preserves_surj}
fails in general, since it is possible to build a morphism
\(\Theta=\{\theta_i\}_{i\in I}\) of direct systems with \({\rm ker}(\Theta)=0\)
such that \({\rm ker}\big(\varinjlim\theta_\star\big)\neq 0\).

For instance, suppose that \(\mm=\delta_{\bar x}\) for some \(\bar x\in\X\),
so that we are dealing with Banach spaces (as observed in Example
\ref{ex:Banach_are_NM}). Consider the sequence space \(\ell^2\) and
the morphism \(T\colon\ell^2\to\ell^2\) defined as
\(T(\lambda_1,\lambda_2,\lambda_3,\ldots)\coloneqq(0,\lambda_2,\lambda_3,\ldots)\).
Moreover, let us define the sequence \((a_n)_{n\in\N}\subseteq\ell^2\) as follows:
\(a_1\coloneqq(1/k)_{k\in\N}\) and \(a_n\coloneqq(\delta_{kn})_{k\in\N}\)
for all \(n\geq 2\). Then we set \(\mathscr M_n\coloneqq{\rm span}\{a_1,\ldots,a_n\}\)
and \(\mathscr N_n\coloneqq\ell^2\) for every \(n\in\N\), while for every \(n\leq m\)
we define the morphisms \(\varphi_{nm}\colon\mathscr M_n\to\mathscr M_m\) and
\(\psi_{nm}\colon\mathscr N_n\to\mathscr N_m\) as the inclusion map and the
identity map, respectively. Finally, let us define the morphism
\(\theta_n\colon\mathscr M_n\to\mathscr N_n\) as
\(\theta_n\coloneqq T\restr{\mathscr M_n}\) for every \(n\in\N\).
Therefore it is immediate to check that
\(\big(\{\mathscr M_n\}_{n\in\N},\{\varphi_{nm}\}_{n\leq m}\big)\),
\(\big(\{\mathscr N_n\}_{n\in\N},\{\psi_{nm}\}_{n\leq m}\big)\) are direct
systems of Banach spaces and that \(\Theta\coloneqq\{\theta_n\}_{n\in\N}\)
is a morphism between them satisfying \({\rm ker}(\Theta)=0\).
Obviously \(\varinjlim\mathscr N_\star=\ell^2\), but also
\(\varinjlim\mathscr M_\star=\ell^2\) by Corollary \ref{cor:DL_sep_NM} and by
density of the sequence \((a_n)_{n\in\N}\) in \(\ell^2\). It also turns out that
\(\varinjlim\theta_\star=T\). This yields the desired counterexample,
as the map \(T\) is not injective.
\fr}\end{remark}
\begin{lemma}\label{lem:DL_I_bdd}
Suppose that the directed set \((I,\leq)\) admits a greatest
element \(m\in I\). Then for any direct system
\(\big(\{\mathscr M_i\}_{i\in I},\{\varphi_{ij}\}_{i\leq j}\big)\)
of normed \(L^0(\mm)\)-modules it holds that
\begin{equation}\label{eq:DL_I_bdd}
\big(\mathscr M_m,\{\varphi_{im}\}_{i\in I}\big)
\text{ is the direct limit of }
\big(\{\mathscr M_i\}_{i\in I},\{\varphi_{ij}\}_{i\leq j}\big).
\end{equation}
In particular, given any morphism \(\Theta=\{\theta_i\}_{i\in I}\) between
two direct systems \(\big(\{\mathscr M_i\}_{i\in I},\{\varphi_{ij}\}_{i\leq j}\big)\)
and \(\big(\{\mathscr N_i\}_{i\in I},\{\psi_{ij}\}_{i\leq j}\big)\) of
normed \(L^0(\mm)\)-modules, it holds that \(\varinjlim\theta_\star=\theta_m\).
\end{lemma}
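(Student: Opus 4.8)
The plan is to prove the statement \eqref{eq:DL_I_bdd} by directly verifying the universal property of the direct limit for the candidate $\big(\mathscr M_m,\{\varphi_{im}\}_{i\in I}\big)$, and then to obtain the ``in particular'' clause as an immediate consequence via the uniqueness of the direct limit. First I would check that $\big(\mathscr M_m,\{\varphi_{im}\}_{i\in I}\big)$ is a target: for every $i,j\in I$ with $i\leq j$ we have $i\leq j\leq m$, so property ii) of a direct system gives $\varphi_{im}=\varphi_{jm}\circ\varphi_{ij}$, which is exactly the required commutativity $\varphi_{im}=\varphi_{jm}\circ\varphi_{ij}$. Moreover each $\varphi_{im}\colon\mathscr M_i\to\mathscr M_m$ is a morphism of normed $L^0(\mm)$-modules by assumption, so the candidate is a legitimate target in the category.

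Next I would verify the universal property. Let $\big(\mathscr N,\{\psi_i\}_{i\in I}\big)$ be any other target. I claim that the unique admissible choice is $\Phi\coloneqq\psi_m$. Indeed, if $\Phi\colon\mathscr M_m\to\mathscr N$ is any morphism satisfying $\Phi\circ\varphi_{im}=\psi_i$ for every $i\in I$, then taking $i=m$ and using $\varphi_{mm}=\mathrm{id}_{\mathscr M_m}$ forces $\Phi=\psi_m$, which gives uniqueness. For existence it suffices to show that $\psi_m$ itself satisfies the compatibility relations: for every $i\in I$ we have $i\leq m$, and since $\big(\mathscr N,\{\psi_i\}_{i\in I}\big)$ is a target the diagram
\[
\psi_m\circ\varphi_{im}=\psi_i
\]
holds by the very definition of a target (property a) applied to the pair $i\leq m$). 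Hence $\Phi=\psi_m$ is a morphism making all the diagrams commute, and it is the unique such morphism. This establishes property b), so $\big(\mathscr M_m,\{\varphi_{im}\}_{i\in I}\big)$ is indeed a direct limit.

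Finally I would deduce the ``in particular'' statement. Given a morphism $\Theta=\{\theta_i\}_{i\in I}$ between the two direct systems, the induced morphism $\varinjlim\theta_\star$ is characterised (cf.\ the construction of the direct limit functor) as the unique morphism satisfying $\varinjlim\theta_\star\circ\varphi_i=\psi_i\circ\theta_i$ for every $i\in I$, where here $\varphi_i=\varphi_{im}$ and $\psi_i=\psi_{im}$ are the canonical morphisms into $\mathscr M_m$ and $\mathscr N_m$ respectively. Taking $i=m$ and using $\varphi_{mm}=\mathrm{id}_{\mathscr M_m}$ and $\psi_{mm}=\mathrm{id}_{\mathscr N_m}$, this relation reads $\varinjlim\theta_\star=\theta_m$, as desired.

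I do not expect any serious obstacle here: the presence of a greatest element $m$ collapses the entire direct system onto the single object $\mathscr M_m$, and the whole argument is a short unwinding of the universal property together with the normalisation $\varphi_{mm}=\mathrm{id}$. The only point requiring a modicum of care is to present uniqueness cleanly by evaluating at the index $i=m$, which is what makes both the identification of the limit and the computation of $\varinjlim\theta_\star$ essentially automatic.
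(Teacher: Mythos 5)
Your proposal is correct and follows essentially the same route as the paper: verify that \(\big(\mathscr M_m,\{\varphi_{im}\}_{i\in I}\big)\) is a target via the compatibility relations of the direct system, then show that for any other target \(\big(\mathscr N,\{\psi_i\}_{i\in I}\big)\) the morphism \(\psi_m\) is the unique one satisfying \(\psi_m\circ\varphi_{im}=\psi_i\). Your write-up merely makes explicit the details the paper leaves implicit (evaluating at \(i=m\) with \(\varphi_{mm}=\mathrm{id}\) for uniqueness, and the analogous evaluation yielding \(\varinjlim\theta_\star=\theta_m\)), which is a faithful elaboration rather than a different argument.
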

\begin{proof}
It easily follows from the fact that \(m\) is the greatest element of
\((I,\leq)\) that \(\big(\mathscr M_m,\{\varphi_{im}\}_{i\in I}\big)\)
is a target. To prove the universal property, fix another target
\(\big(\mathscr N,\{\psi_i\}_{i\in I}\big)\). Then \(\psi_m\) is the
unique normed \(L^0(\mm)\)-module morphism between \(\mathscr M_m\)
and \(\mathscr N\) such that \(\psi_m\circ\varphi_{im}=\psi_i\)
holds for every \(i\in I\), which shows the validity of the universal
property and accordingly the claim \eqref{eq:DL_I_bdd}.
\end{proof}
\begin{remark}\label{rmk:DL_no_full_faithful}{\rm
The direct limit functor \(\varinjlim\) is neither faithul nor full,
as we are going to prove.

Suppose that \(I=\{0,1\}\) and that \(\mm=\delta_{\bar x}\) for some \(\bar x\in\X\).
Set \(\mathscr M_0=\mathscr M_1=\mathscr N_0=\mathscr N_1\coloneqq\R^2\),
viewed as Banach spaces with the usual Euclidean
norm (recall Example \ref{ex:Banach_are_NM}). We also define
the maps \(\varphi_{01}\colon\mathscr M_0\to\mathscr M_1\) and
\(\psi_{01}\colon\mathscr N_0\to\mathscr N_1\) as \(\varphi_{01}(x,y)\coloneqq(x,y)\)
and \(\psi_{01}(x,y)\coloneqq(x,0)\), respectively. We have that
\(\varinjlim\mathscr M_\star=\mathscr M_1\) and
\(\varinjlim\mathscr N_\star=\mathscr N_1\) by Lemma \ref{lem:DL_I_bdd}.
\begin{itemize}
\item[\(\rm i)\)] Let us consider the morphisms \(\Theta=\{\theta_0,\theta_1\}\)
and \({\rm H}=\{\eta_0,\eta_1\}\) between the two direct systems
\(\big(\{\mathscr M_0,\mathscr M_1\},\{\varphi_{00},\varphi_{01},\varphi_{11}\}\big)\)
and \(\big(\{\mathscr N_0,\mathscr N_1\},\{\psi_{00},\psi_{01},\psi_{11}\}\big)\)
defined as follows:
\[\begin{split}
\theta_0(x,y)&\coloneqq(x,y),\\
\eta_0(x,y)&\coloneqq(x,-y),\\
\theta_1(x,y)=\eta_1(x,y)&\coloneqq(x,0).
\end{split}\]
Then \(\Theta\neq{\rm H}\), but \(\varinjlim\theta_\star=\varinjlim\eta_\star\)
by Lemma \ref{lem:DL_I_bdd}.
\item[\(\rm ii)\)] Consider the morphism \(\theta_1\colon\mathscr M_1\to\mathscr N_1\)
given by \(\theta_1(x,y)\coloneqq(x,y)\). Then there cannot exist a normed
\(L^0(\mm)\)-module morphism \(\theta_0\colon\mathscr M_0\to\mathscr N_0\)
such that \(\theta_1\circ\varphi_{01}=\psi_{01}\circ\theta_0\), the reason
being that the map \(\theta_1\circ\varphi_{01}\) is surjective while
\(\psi_{01}\) is not. This means that we cannot write the morphism
\(\theta_1\colon\varinjlim\mathscr M_i\to\varinjlim\mathscr N_i\)
as \(\varinjlim\theta_\star\) for some morphism \(\{\theta_0,\theta_1\}\)
between the direct systems
\(\big(\{\mathscr M_0,\mathscr M_1\},\{\varphi_{00},\varphi_{01},\varphi_{11}\}\big)\)
and \(\big(\{\mathscr N_0,\mathscr N_1\},\{\psi_{00},\psi_{01},\psi_{11}\}\big)\).
\end{itemize}
Items i) and ii) above show that the functor \(\varinjlim\) is neither faithul
nor full, respectively.
\fr}\end{remark}
\begin{theorem}[Pullback and direct limit commute]\label{thm:pullback_DL}
Let \((\X,\sfd_\X,\mm_\X)\), \((\Y,\sfd_\Y,\mm_\Y)\) be metric measure spaces.
Let \(f\colon\X\to\Y\) be a Borel map with \(f_*\mm_\X\ll\mm_\Y\).
Let \(\big(\{\mathscr M_i\}_{i\in I},\{\varphi_{ij}\}_{i\leq j}\big)\) be a direct
system of normed \(L^0(\mm_\Y)\)-modules, whose direct limit is denoted by
\(\big(\varinjlim\mathscr M_\star,\{\varphi_i\}_{i\in I}\big)\). Then
\(\big(\{f^*\mathscr M_i\}_{i\in I},\{f^*\varphi_{ij}\}_{i\leq j}\big)\) is
a direct system of normed \(L^0(\mm_\X)\)-modules. Its direct limit is
\begin{equation}\label{eq:DL_vs_pullback}
\varinjlim f^*\mathscr M_\star\cong f^*\varinjlim\mathscr M_\star
\end{equation}
together with the canonical morphisms \(\{f^*\varphi_i\}_{i\in I}\).
\end{theorem}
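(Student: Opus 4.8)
The plan is to show that \(\big(f^*\varinjlim\mathscr M_\star,\{f^*\varphi_i\}_{i\in I}\big)\) satisfies the universal property of the direct limit in the category of normed \(L^0(\mm_\X)\)-modules; by uniqueness of direct limits this yields \eqref{eq:DL_vs_pullback}. The preliminary bookkeeping is straightforward: that \(\big(\{f^*\mathscr M_i\}_{i\in I},\{f^*\varphi_{ij}\}_{i\leq j}\big)\) is a direct system and that \(\big(f^*\varinjlim\mathscr M_\star,\{f^*\varphi_i\}_{i\in I}\big)\) is a target both follow by applying \(f^*\) to the corresponding identities for the original system, once one records the functoriality relations \(f^*({\rm id})={\rm id}\) and \(f^*(\beta\circ\alpha)=f^*\beta\circ f^*\alpha\); these in turn are immediate consequences of the uniqueness clause in Theorem \ref{thm:pullback} (both sides are morphisms fitting into the same commutative square). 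I will also use repeatedly the commutation \((f^*\varphi)(f^*v)=f^*\big(\varphi(v)\big)\) coming from the diagram in Theorem \ref{thm:pullback}, the norm-preservation \(|f^*v|=|v|\circ f\) from \eqref{eq:properties_pullback}, and the compatibility \(f^*(g\cdot v)=(g\circ f)\cdot f^*v\) with the change of rings, which is intrinsic to the pullback construction.

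For the universal property, fix a target \(\big(\mathscr N,\{\psi_i\}_{i\in I}\big)\) of the pulled-back system, so each \(\psi_i\colon f^*\mathscr M_i\to\mathscr N\) is a morphism of \(L^0(\mm_\X)\)-modules with \(\psi_j\circ f^*\varphi_{ij}=\psi_i\) for \(i\leq j\). I would first transport this data back to the \(\mathscr M_i\) by setting \(L_i\coloneqq\psi_i\circ f^*\colon\mathscr M_i\to\mathscr N\). Each \(L_i\) is linear over the ring morphism \(g\mapsto g\circ f\), satisfies the contraction bound \(|L_i(v)|=|\psi_i(f^*v)|\leq|f^*v|=|v|\circ f\), and the family is compatible, since \(L_j\circ\varphi_{ij}=\psi_j\circ f^*\circ\varphi_{ij}=\psi_j\circ f^*\varphi_{ij}\circ f^*=\psi_i\circ f^*=L_i\). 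Exactly as in the construction of \(\varinjlim\mathscr M_\star\) in Theorem \ref{thm:DL_nor_mod}, compatibility lets me define a single map \(L\colon\mathscr M_{\rm Alg}\to\mathscr N\) by \(L(\boldsymbol v)\coloneqq L_i(v)\) whenever \(\varphi'_i(v)=\boldsymbol v\) (well-posed, because any two representatives agree after pushing to a common index); this \(L\) is again linear over \(g\mapsto g\circ f\).

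The hard part is the norm estimate \(|L(\boldsymbol v)|\leq|\boldsymbol v|\circ f\) holding \(\mm_\X\)-a.e., which is what allows \(L\) to descend through the metric identification and to extend continuously. The difficulty is that the pointwise norm on \(\mathscr M_{\rm Alg}\) is defined in \eqref{eq:ptwse_norm_on_M_Alg} as an essential infimum over representatives \emph{before} composing with \(f\), so one must commute this essential infimum with precomposition by \(f\). I would handle this by observing that the family of norms of representatives of \(\boldsymbol v\) is downward directed: given representatives \(v\in\mathscr M_i\), \(w\in\mathscr M_j\) and \(k\geq i,j\) with \(\varphi_{ik}(v)=\varphi_{jk}(w)\), the common value \(u\coloneqq\varphi_{ik}(v)\) is again a representative of \(\boldsymbol v\) with \(|u|\leq|v|\wedge|w|\), since \(\varphi_{ik},\varphi_{jk}\) are morphisms. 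Hence there is a sequence of representatives \(u_n\) with \(|u_n|\downarrow|\boldsymbol v|\) \(\mm_\Y\)-a.e.; as \(L(\boldsymbol v)=L_{i_n}(u_n)\) for every \(n\), the contraction bound gives \(|L(\boldsymbol v)|\leq|u_n|\circ f\), and letting \(n\to\infty\) (monotone a.e.\ convergence is preserved by precomposition with \(f\), because \(f_*\mm_\X\ll\mm_\Y\)) yields \(|L(\boldsymbol v)|\leq|\boldsymbol v|\circ f\) \(\mm_\X\)-a.e. Consequently \(L\) kills the kernel of the seminorm, is continuous, and extends to a map \(\hat L\colon\varinjlim\mathscr M_\star\to\mathscr N\), linear over \(g\mapsto g\circ f\), characterised by \(\hat L\circ\varphi_i=\psi_i\circ f^*\).

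It remains to factor \(\hat L\) through the pullback. Since \(\hat L\) satisfies \(|\hat L(\boldsymbol w)|\leq|\boldsymbol w|\circ f=|f^*\boldsymbol w|\), the universal factorisation property of the pullback (the mapping-out property underlying Theorem \ref{thm:pullback}) produces a unique \(L^0(\mm_\X)\)-module morphism \(\Phi\colon f^*\varinjlim\mathscr M_\star\to\mathscr N\) with \(\Phi\circ f^*=\hat L\); the displayed estimate shows \(\Phi\) is a contraction on the generating set \(\{f^*\boldsymbol w\}\), hence a morphism. One then checks \(\Phi\circ f^*\varphi_i=\psi_i\) on the generators \(f^*v\) of \(f^*\mathscr M_i\) via \(\Phi\big(f^*\varphi_i(f^*v)\big)=\Phi\big(f^*(\varphi_i v)\big)=\hat L(\varphi_i v)=\psi_i(f^*v)\), whence the two morphisms agree. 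Uniqueness of \(\Phi\) follows because \(\Phi\circ f^*\varphi_i=\psi_i\) forces \(\Phi\circ f^*=\hat L\) on the dense set \(\bigcup_i\varphi_i(\mathscr M_i)\) (Remark \ref{rmk:DL_separable}), and \(\{f^*\boldsymbol w\}\) generates \(f^*\varinjlim\mathscr M_\star\). The only ingredient beyond the explicitly stated results is the mapping-out property used here; if one prefers to avoid it, the same \(\Phi\) can be built by hand through \(\Phi\big(\sum_k g_k\cdot f^*\boldsymbol w_k\big)\coloneqq\sum_k g_k\cdot\hat L(\boldsymbol w_k)\), whose well-posedness and contractivity are guaranteed precisely by the bound \(|\hat L(\boldsymbol w)|\leq|f^*\boldsymbol w|\) together with the generating property \eqref{eq:properties_pullback}.
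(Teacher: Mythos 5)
Your proposal is correct and rests on the same basic strategy as the paper (verify the universal property of the target \(\big(f^*\varinjlim\mathscr M_\star,\{f^*\varphi_i\}_{i\in I}\big)\)), but it is organised genuinely differently. The paper defines the comparison morphism \(\Phi\) directly on the elements \(f^*\big(\varphi_i(v)\big)\), checking well-posedness by choosing a common index \(k\) with \(\varphi_{ik}(v)=\varphi_{jk}(v')\) and then passing from \(\big|\Phi(f^*\boldsymbol v)\big|\leq|v|\circ f\) to \(\big|\Phi(f^*\boldsymbol v)\big|\leq|\boldsymbol v|\circ f\); you instead transport the target backwards via \(L_i\coloneqq\psi_i\circ f^*\), glue these into a map on the algebraic limit \(\mathscr M_{\rm Alg}\), prove the norm bound there, descend and complete to get \(\hat L\), and only then factor \(\hat L\) through \(f^*\). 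Your route is more careful at exactly the two points the paper elides: first, \(\varphi_i(v)=\varphi_j(v')\) in the \emph{normed} direct limit does not guarantee \(\varphi_{ik}(v)=\varphi_{jk}(v')\) for some \(k\) (equality there only means the seminorm of the difference vanishes), a subtlety you avoid by gluing in \(\mathscr M_{\rm Alg}\), where such a \(k\) exists by definition; second, the interchange of the essential infimum in \eqref{eq:ptwse_norm_on_M_Alg} with precomposition by \(f\), which you justify through downward directedness of the representative norms while the paper treats it as immediate. The price is your appeal to the mapping-out property of the pullback (any \(T\) linear over \(g\mapsto g\circ f\) with \(|T(v)|\leq|v|\circ f\) factors uniquely through \(f^*\)), which is stronger than the uniqueness clause actually stated in Theorem \ref{thm:pullback}; you flag this honestly, but your fallback construction is slightly too quick, since well-posedness of \(\Phi\big(\sum_k g_k\cdot f^*\boldsymbol w_k\big)\coloneqq\sum_k g_k\cdot\hat L(\boldsymbol w_k)\) for arbitrary \(g_k\in L^0(\mm_\X)\) does not follow from the single-element bound alone: one must first reduce to simple coefficients and rewrite such combinations as \(\sum_j\nchi_{F_j}\cdot f^*u_j\) with \(F_j\) disjoint, where the pointwise norm splits and the bound applies termwise, and then conclude by approximation. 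With that routine supplement your argument is complete.
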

\begin{proof}
It follows from Theorem \ref{thm:pullback} that the diagram
\[\begin{tikzcd}
\mathscr M_i \arrow[r,"\varphi_{ij}"] \arrow[d,swap,"f^*"] &
\mathscr M_j \arrow[r,"\varphi_{jk}"] \arrow[d,swap,"f^*"] &
\mathscr M_k \arrow[r,"\varphi_k"] \arrow[d,swap,"f^*"] &
\varinjlim\mathscr M_\star \arrow[d,swap,"f^*"] \\
f^*\mathscr M_i \arrow[r,swap,"f^*\varphi_{ij}"] & f^*\mathscr M_j
\arrow[r,swap,"f^*\varphi_{jk}"] & f^*\mathscr M_k
\arrow[r,swap,"f^*\varphi_k"] & f^*\varinjlim\mathscr M_\star
\end{tikzcd}\]
commutes for every \(i,j,k\in I\) with \(i\leq j\leq k\), whence accordingly
\(\big(\{f^*\mathscr M_i\}_{i\in I},\{f^*\varphi_{ij}\}_{i\leq j}\big)\)
is a direct system of normed \(L^0(\mm_\X)\)-modules having
\(\big(f^*\varinjlim\mathscr M_\star,\{f^*\varphi_i\}_{i\in I}\big)\) as a target.
Given any other target \(\big(\mathscr N,\{\psi_i\}_{i\in I}\big)\), there exists
a unique morphism \(\Phi\colon f^*\varinjlim\mathscr M_\star\to\mathscr N\) such that
\begin{equation}\label{eq:pullback_DL_claim}
\Phi\big(f^*\big(\varphi_i(v)\big)\big)=\psi_i(f^*v)
\quad\text{ for every }i\in I\text{ and }v\in\mathscr M_i,
\end{equation}
as we are going to show. Since \(\bigcup_{i\in I}\varphi_i(\mathscr M_i)\)
is a dense submodule of \(\varinjlim\mathscr M_\star\) (cf.\ Remark
\ref{rmk:DL_separable}), we know that
\(\bigcup_{i\in I}\big\{f^*\big(\varphi_i(v)\big)\,:\,i\in I,\,v\in\mathscr M_i\big\}\)
generates \(f^*\varinjlim\mathscr M_\star\) by Theorem \ref{thm:pullback}, whence
uniqueness follows. To prove (well-posedness and) existence, we need to show that if
\(\boldsymbol v\in\varinjlim\mathscr M_\star\) can be written as
\(\boldsymbol v=\varphi_i(v)=\varphi_j(v')\) for some
\(v\in\mathscr M_i\) and \(v'\in\mathscr M_j\), then \(\psi_i(f^*v)=\psi_j(f^*v')\)
and the inequality \(\big|\psi_i(f^*v)\big|\leq|f^*\boldsymbol v|\) holds in the
\(\mm_\X\)-a.e.\ sense. Indeed, given any \(k\in I\) with \(i,j\leq k\) such that
\(\varphi_{ik}(v)=\varphi_{jk}(v')\), we deduce from the fact that the diagram
\[\begin{tikzcd}
\mathscr M_i \arrow[d,swap,"f^*"] \arrow[r,"\varphi_{ik}"] &
\mathscr M_k \arrow[d,swap,"f^*"] &
\mathscr M_j \arrow[l,swap,"\varphi_{jk}"] \arrow[d,swap,"f^*"] \\
f^*\mathscr M_i \arrow[rd,swap,"\psi_i"] \arrow[r,"f^*\varphi_{ik}"] &
f^*\mathscr M_k \arrow[d,swap,"\psi_k"] &
f^*\mathscr M_j \arrow[l,swap,"f^*\varphi_{jk}"] \arrow[ld,"\psi_j"] \\
& \mathscr N &
\end{tikzcd}\]
commutes that \(\psi_i(f^*v)\) and \(\psi_j(f^*v')\) coincide,
thus it makes sense to define \(\Phi(f^*\boldsymbol v)\coloneqq\psi_i(f^*v)\).
Since \(\big|\Phi(f^*\boldsymbol v)\big|\leq|f^*v|=|v|\circ f\) holds
\(\mm\)-a.e.\ for every \(i\in I\) and \(v\in\mathscr M_i\) with
\(\varphi_i(v)=\boldsymbol v\), we infer that
\(\big|\Phi(f^*\boldsymbol v)\big|\leq|\boldsymbol v|\circ f=|f^*\boldsymbol v|\).
Therefore there exists a (unique) morphism
\(\Phi\colon f^*\varinjlim\mathscr M_\star\to\mathscr N\)
satisfying \eqref{eq:pullback_DL_claim}, thus also
\(\Phi\circ(f^*\varphi_i)=\psi_i\) for all \(i\in I\).
This proves the universal property and accordingly that
\(\big(f^*\varinjlim\mathscr M_\star,\{f^*\varphi_i\}_{i\in I}\big)\)
is the direct limit of
\(\big(\{f^*\mathscr M_i\}_{i\in I},\{f^*\varphi_{ij}\}_{i\leq j}\big)\).
\end{proof}
\section{Inverse limits of normed \texorpdfstring{\(L^0(\mm)\)}{L0(m)}-modules}
\subsection{Definition}
Let us fix a metric measure space \((\X,\sfd,\mm)\). As we are going to see in
this subsection, inverse limits exist in the category of normed \(L^0(\mm)\)-modules.
This has been already proved in \cite{GPS18}; for the sake of completeness,
we report here the full proof of such fact.
\begin{theorem}[Inverse limit of normed \(L^0(\mm)\)-modules]\label{thm:IL_nor_mod}
Let \(\big(\{\mathscr M_i\}_{i\in I},\{\P_{ij}\}_{i\leq j}\big)\)
be an inverse system of normed \(L^0(\mm)\)-modules. Then its inverse limit
\(\big(\varprojlim\mathscr M_\star,\{\P_i\}_{i\in I}\big)\) exists
in the category of normed \(L^0(\mm)\)-modules.
\end{theorem}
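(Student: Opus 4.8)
The plan is to construct $\varprojlim\mathscr M_\star$ concretely as an $L^0(\mm)$-submodule of the algebraic inverse limit, equip it with a natural pointwise norm, and then verify the completeness of the associated distance followed by the universal property. First I would form the algebraic inverse limit $\mathscr M_{\rm Alg}\coloneqq\big\{\{v_i\}_{i\in I}\in\prod_{i\in I}\mathscr M_i\;\big|\;v_i=\P_{ij}(v_j)\text{ for }i\leq j\big\}$ as in Subsection \ref{ss:lim_mod}, which already carries a canonical $L^0(\mm)$-module structure with natural projections $\P_i(v)=v_i$. The key observation is that since each $\P_{ij}$ is a morphism, we have $|v_i|=|\P_{ij}(v_j)|\leq|v_j|$ $\mm$-a.e.\ for $i\leq j$, so the family $\{|v_i|\}_{i\in I}$ is monotone along the directed set. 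This suggests defining the pointwise norm by
\[
|v|\coloneqq{\rm ess\,sup}_{i\in I}|v_i|\in L^0(\mm)\quad\text{ for every }v=\{v_i\}_{i\in I}\in\mathscr M_{\rm Alg}.
\]
One must restrict to those $v$ for which $|v|$ is finite $\mm$-a.e., so I would set $\varprojlim\mathscr M_\star\coloneqq\big\{v\in\mathscr M_{\rm Alg}\,:\,|v|<+\infty\;\mm\text{-a.e.}\big\}$ and check this is an $L^0(\mm)$-submodule on which $|\cdot|$ satisfies the pointwise-norm axioms (subadditivity and homogeneity pass to the essential supremum; definiteness follows since $|v|=0$ forces each $|v_i|=0$, hence each $v_i=0$).

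Next I would verify that $|\cdot|$ is genuinely a pointwise \emph{norm} and that the projections $\P_i\colon\varprojlim\mathscr M_\star\to\mathscr M_i$ are morphisms, which is immediate from $|\P_i(v)|=|v_i|\leq|v|$ $\mm$-a.e. The commutativity condition $\P_i=\P_{ij}\circ\P_j$ holds by the very definition of the inverse limit. The main obstacle I expect is the \textbf{completeness} of $\big(\varprojlim\mathscr M_\star,\sfd_{\varprojlim\mathscr M_\star}\big)$: given a Cauchy sequence $(v^n)_n$, each coordinate sequence $(v^n_i)_n$ is Cauchy in $\mathscr M_i$ (since $\sfd_{\mathscr M_i}(v^n_i,v^m_i)\leq\sfd_{\varprojlim\mathscr M_\star}(v^n,v^m)$ by monotonicity of the norm), hence converges to some $v_i\in\mathscr M_i$ by completeness of each $\mathscr M_i$. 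The delicate points are then (i) that the limit family $\{v_i\}_{i\in I}$ is compatible, i.e.\ $v_i=\P_{ij}(v_j)$, which follows by passing to the limit in $v^n_i=\P_{ij}(v^n_j)$ using continuity of $\P_{ij}$; (ii) that $v\coloneqq\{v_i\}$ actually lies in $\varprojlim\mathscr M_\star$ (finiteness of $|v|$), and (iii) that $v^n\to v$ in the inverse-limit distance, not merely coordinatewise. For (ii) and (iii) I would extract from the Cauchy condition a uniform control of the form ${\rm ess\,sup}_i|v^n_i-v^m_i|$ small, and then pass to the limit in $m$ to bound ${\rm ess\,sup}_i|v^n_i-v_i|=|v^n-v|$; a standard diagonal/Fatou-type argument on the essential supremum should close this, though care is needed because the supremum is over a possibly uncountable index set.

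Finally I would establish the universal property. Given any couple $\big(\mathscr N,\{{\rm Q}_i\}_{i\in I}\big)$ with ${\rm Q}_i=\P_{ij}\circ{\rm Q}_j$ for $i\leq j$, the only possible morphism $\Phi\colon\mathscr N\to\varprojlim\mathscr M_\star$ is forced by the projections: it must satisfy $\P_i\circ\Phi={\rm Q}_i$, so I set $\Phi(w)\coloneqq\{{\rm Q}_i(w)\}_{i\in I}$ for $w\in\mathscr N$. The compatibility ${\rm Q}_i=\P_{ij}\circ{\rm Q}_j$ guarantees $\Phi(w)\in\mathscr M_{\rm Alg}$, and the morphism bound $|{\rm Q}_i(w)|\leq|w|$ $\mm$-a.e.\ gives $|\Phi(w)|={\rm ess\,sup}_i|{\rm Q}_i(w)|\leq|w|<+\infty$ $\mm$-a.e., so $\Phi(w)\in\varprojlim\mathscr M_\star$ and $\Phi$ is itself a morphism with $|\Phi(w)|\leq|w|$. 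Uniqueness is clear since any $\Phi$ with $\P_i\circ\Phi={\rm Q}_i$ must have $i$-th coordinate equal to ${\rm Q}_i$. This yields the required universal factorization and completes the identification of $\big(\varprojlim\mathscr M_\star,\{\P_i\}_{i\in I}\big)$ as the inverse limit in the category of normed $L^0(\mm)$-modules.
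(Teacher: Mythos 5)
Your proposal is correct and follows essentially the same route as the paper: algebraic inverse limit, pointwise norm \(|v|={\rm ess\,sup}_{i\in I}|\P_i'(v)|\), restriction to the elements where this is \(\mm\)-a.e.\ finite, coordinatewise completeness argument, and the same forced construction of \(\Phi\) for the universal property. The only point you leave slightly open -- the uncountable essential supremum in step (iii) -- closes exactly as you suspect and as the paper does: each coordinate satisfies \(|v_i-\P_i(v^n)|\leq\lim_m|v^m-v^n|\) \(\mm\)-a.e.\ with a bound independent of \(i\), so the essential supremum obeys the same bound by its very definition, with no diagonal argument needed.
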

\begin{proof}
Since \(\big(\{\mathscr M_i\}_{i\in I},\{\P_{ij}\}_{i\leq j}\big)\) is an inverse
system in the category of algebraic \(L^0(\mm)\)-modules, we can consider its inverse
limit \(\big(\mathscr M_{\rm Alg},\{\P'_i\}_{i\in I}\big)\) in such
category (cf.\ Subsection \ref{ss:lim_mod}).
Given any element \(v\in\mathscr M_{\rm Alg}\), we define
(up to \(\mm\)-a.e.\ equality) the Borel function \(|v|\colon\X\to[0,+\infty]\) as
\begin{equation}\label{eq:IL_def_ptwse_norm}
|v|\coloneqq\underset{i\in I}{\rm ess\,sup\,}\big|\P'_i(v)\big|
\end{equation}
Then we define the \(L^0(\mm)\)-submodule \(\varprojlim\mathscr M_\star\)
of \(\mathscr M_{\rm Alg}\) as
\[\varprojlim\mathscr M_\star\coloneqq
\big\{v\in\mathscr M_{\rm Alg}\,:\,|v|\in L^0(\mm)\big\}
=\big\{v\in\mathscr M_{\rm Alg}\,:\,|v|<+\infty\;\mm\text{-a.e.}\big\},\]
while the natural projections \(\P_i\colon\varprojlim\mathscr M_\star\to\mathscr M_i\)
are given by \(\P_i\coloneqq\P'_i\restr{\varprojlim\mathscr M_\star}\). We claim that
\begin{equation}\label{eq:IL_nor_mod_claim}
\big(\varprojlim\mathscr M_\star,\{\P_i\}_{i\in I}\big)\text{ is the inverse limit of }
\big(\{\mathscr M_i\}_{i\in I},\{\P_{ij}\}_{i\leq j}\big)
\text{ as normed }L^0(\mm)\text{-modules.}
\end{equation}
First of all, we need to show that \(\varprojlim\mathscr M_\star\) is a
normed \(L^0(\mm)\)-module. The only non-trivial fact to check is its completeness:
fix a Cauchy sequence \((v^n)_{n\in\N}\) in \(\varprojlim\mathscr M_\star\).
Given that \(\big|\P_i(v^n)\big|\leq|v^n|\) holds \(\mm\)-a.e.\ for all
\(i\in I\) and \(n\in\N\) by \eqref{eq:IL_def_ptwse_norm}, we deduce that the
sequence \(\big(\P_i(v^n)\big)_{n\in\N}\) is Cauchy in the complete space \(\mathscr M_i\)
for every \(i\in I\), whence it admits a limit \(v_i\in\mathscr M_i\).
Since the maps \(\P_{ij}\) are continuous for all \(i,j\in I\) with \(i\leq j\),
we can pass to the limit as \(n\to\infty\) in \(\P_i(v^n)=\P_{ij}\big(\P_j(v^n)\big)\)
and obtain that \(v_i=\P_{ij}(v_j)\), which means that
\(v\coloneqq\{v_i\}_{i\in I}\in\mathscr M_{\rm Alg}\).
Moreover, it can be readily checked that the map \(|\cdot|\) is a pointwise norm
on \(\varprojlim\mathscr M_\star\), thus the inequality
\(\big||v^n|-|v^m|\big|\leq|v^n-v^m|\) holds \(\mm\)-a.e.\ for every \(n,m\in\N\)
and accordingly \(\big(|v^n|\big)_{n\in\N}\) is a Cauchy sequence in
the space \(L^0(\mm)\). Calling \(f\in L^0(\mm)\) its limit,
we infer from \eqref{eq:IL_def_ptwse_norm} that
\[|v_i|=\lim_{n\to\infty}\big|\P_i(v^n)\big|\leq\lim_{n\to\infty}|v^n|=f
\;\;\;\mm\text{-a.e.}\quad\text{ for every }i\in I.\]
This grants that \(|v|\leq f<+\infty\) holds \(\mm\)-a.e.\ in \(\X\),
therefore \(v\in\varprojlim\mathscr M_\star\). It also holds that
\[|v-v^n|\overset{\eqref{eq:IL_def_ptwse_norm}}=
\underset{i\in I}{\rm ess\,sup\,}\big|v_i-\P_i(v^n)\big|
=\underset{i\in I}{\rm ess\,\sup}\lim_{m\to\infty}\big|\P_i(v^m)-\P_i(v^n)\big|
\overset{\eqref{eq:IL_def_ptwse_norm}}\leq\lim_{m\to\infty}|v^m-v^n|\]
in the \(\mm\)-a.e.\ sense. Then by letting \(n\to\infty\) we conclude that
\(|v-v^n|\to 0\) in \(L^0(\mm)\), or equivalently that \(v_n\to v\) in
\(\varprojlim\mathscr M_\star\), which proves the
completeness of \(\varprojlim\mathscr M_\star\).

Furthermore, it is immediate from the construction that each map \(\P_i\) is a normed
\(L^0(\mm)\)-module morphism and that \(\P_i=\P_{ij}\circ\P_j\) holds whenever
\(i,j\in I\) satisfy \(i\leq j\), thus in order to get the claim
\eqref{eq:IL_nor_mod_claim} it just remains to prove the universal property.
To this aim, fix any couple \(\big(\mathscr N,\{{\rm Q}_i\}_{i\in I}\big)\)
such that \({\rm Q}_i(w)=(\P_{ij}\circ{\rm Q}_j)(w)\) holds for all \(i,j\in I\)
with \(i\leq j\) and \(w\in\mathscr N\). Then for any \(w\in\mathscr N\)
there exists a unique element \(\Phi(w)\in\mathscr M_{\rm Alg}\) satisfying
\(\P'_i\big(\Phi(w)\big)={\rm Q}_i(w)\) for every \(i\in I\).
Given that \(\big|{\rm Q}_i(w)\big|\leq|w|\) holds
\(\mm\)-a.e.\ for every \(i\in I\), we deduce that
\[\big|\Phi(w)\big|\overset{\eqref{eq:IL_def_ptwse_norm}}=
\underset{i\in I}{\rm ess\,sup\,}\big|\P'_i\big(\Phi(w)\big)\big|
=\underset{i\in I}{\rm ess\,sup\,}\big|{\rm Q}_i(w)\big|
\leq|w|<+\infty\quad\text{ in the }\mm\text{-a.e.\ sense,}\]
whence \(\Phi(w)\in\varprojlim\mathscr M_\star\).
Therefore \(\Phi\colon\mathscr N\to\varprojlim\mathscr M_\star\)
is the unique morphism such that \({\rm Q}_i=\P_i\circ\Phi\) for all \(i\in I\).
This proves the universal property and accordingly \eqref{eq:IL_nor_mod_claim},
thus concluding the proof of the statement.
\end{proof}
\begin{remark}\label{rmk:IL_separable}{\rm
The following fact stems from the proof of Theorem \ref{thm:IL_nor_mod}:
if \(\{v_i\}_{i\in I}\) is a family of elements \(v_i\in\mathscr M_i\) satisfying
\(v_i=\P_{ij}(v_j)\) for all \(i,j\in I\) with \(i\leq j\) and
\({\rm ess\,sup}_{i\in I}|v_i|<+\infty\) in the \(\mm\)-a.e.\ sense, then there
exists a unique element \(v\in\varprojlim\mathscr M_\star\) such that \(v_i=\P_i(v)\)
for every \(i\in I\). Moreover, it holds that \(|v|={\rm ess\,sup}_{i\in I}|v_i|\).
\fr}\end{remark}
\begin{definition}[Morphism of inverse systems of normed \(L^0(\mm)\)-modules]
A morphism \(\Theta\) between two inverse systems
\(\big(\{\mathscr M_i\}_{i\in I},\{\P_{ij}\}_{i\leq j}\big)\) and
\(\big(\{\mathscr N_i\}_{i\in I},\{{\rm Q}_{ij}\}_{i\leq j}\big)\) of normed
\(L^0(\mm)\)-modules is a family \(\Theta=\{\theta_i\}_{i\in I}\) of
normed \(L^0(\mm)\)-module morphisms \(\theta_i\colon\mathscr M_i\to\mathscr N_i\)
such that
\begin{equation}\label{eq:inverse_system_norm_mod}\begin{tikzcd}
\mathscr M_j \arrow[r,"\theta_j"] \arrow[d,swap,"\P_{ij}"] &
\mathscr N_j \arrow[d,"{\rm Q}_{ij}"] \\
\mathscr M_i \arrow[r,swap,"\theta_i"] & \mathscr N_i
\end{tikzcd}\end{equation}
is a commutative diagram for every \(i,j\in I\) with \(i\leq j\).
\end{definition}

With the above notion of morphism at our disposal, we can consider
the category of inverse systems of normed \(L^0(\mm)\)-modules. The
correspondence associating to any inverse system of normed \(L^0(\mm)\)-modules
its inverse limit can be made into a functor, as we are going to see.
\begin{theorem}[The inverse limit functor \(\varprojlim\)]
Let \(\Theta=\{\theta_i\}_{i\in I}\) be a morphism between two inverse
systems \(\big(\{\mathscr M_i\}_{i\in I},\{\P_{ij}\}_{i\leq j}\big)\)
and \(\big(\{\mathscr N_i\}_{i\in I},\{{\rm Q}_{ij}\}_{i\leq j}\big)\)
of normed \(L^0(\mm)\)-modules, whose inverse limits are denoted by
\(\big(\varprojlim\mathscr M_\star,\{\P_i\}_{i\in I}\big)\) and
\(\big(\varprojlim\mathscr N_\star,\{{\rm Q}_i\}_{i\in I}\big)\), respectively.
Then there exists a unique normed \(L^0(\mm)\)-module morphism
\(\varprojlim\theta_\star\colon\varprojlim\mathscr M_\star\to\varprojlim\mathscr N_\star\)
such that the diagram
\begin{equation}\label{eq:diagram_IL_morph}\begin{tikzcd}
\varprojlim\mathscr M_\star \arrow[r,"\varprojlim\theta_\star"]
\arrow[d,swap,"\P_i"] & \varprojlim\mathscr N_\star \arrow[d,"{\rm Q}_i"] \\
\mathscr M_i \arrow[r,swap,"\theta_i"] & \mathscr N_i
\end{tikzcd}\end{equation}
commutes for every \(i\in I\). In particular, the
correspondence \(\varprojlim\) is a covariant functor
from the category of inverse systems of normed \(L^0(\mm)\)-modules to
the category of normed \(L^0(\mm)\)-modules.
\end{theorem}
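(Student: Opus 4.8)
The plan is to avoid building the map by hand and instead to invoke the universal property of \(\varprojlim\mathscr N_\star\) established in Theorem \ref{thm:IL_nor_mod}. First I would set
\[
{\rm R}_i\coloneqq\theta_i\circ\P_i\colon\varprojlim\mathscr M_\star\to\mathscr N_i
\quad\text{ for every }i\in I.
\]
Each \({\rm R}_i\) is automatically a morphism of normed \(L^0(\mm)\)-modules, being the composition of the two morphisms \(\P_i\) and \(\theta_i\). The one computation that really has to be carried out is the compatibility relation \({\rm R}_i={\rm Q}_{ij}\circ{\rm R}_j\) for \(i\leq j\). Combining the commutativity of \eqref{eq:inverse_system_norm_mod}, in the form \({\rm Q}_{ij}\circ\theta_j=\theta_i\circ\P_{ij}\), with the target identity \(\P_i=\P_{ij}\circ\P_j\) valid for the inverse limit \(\big(\varprojlim\mathscr M_\star,\{\P_i\}_{i\in I}\big)\), one gets
\[
{\rm Q}_{ij}\circ{\rm R}_j={\rm Q}_{ij}\circ\theta_j\circ\P_j
=\theta_i\circ\P_{ij}\circ\P_j=\theta_i\circ\P_i={\rm R}_i.
\]

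This is precisely the compatibility condition required in item b) of the definition of inverse limit, applied to the inverse system \(\big(\{\mathscr N_i\}_{i\in I},\{{\rm Q}_{ij}\}_{i\leq j}\big)\) and the source object \(\varprojlim\mathscr M_\star\) equipped with the morphisms \(\{{\rm R}_i\}_{i\in I}\). Therefore the universal property of \(\big(\varprojlim\mathscr N_\star,\{{\rm Q}_i\}_{i\in I}\big)\) yields a \emph{unique} normed \(L^0(\mm)\)-module morphism \(\Phi\colon\varprojlim\mathscr M_\star\to\varprojlim\mathscr N_\star\) satisfying \({\rm Q}_i\circ\Phi={\rm R}_i=\theta_i\circ\P_i\) for every \(i\in I\). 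Setting \(\varprojlim\theta_\star\coloneqq\Phi\), this equality is exactly the commutativity of \eqref{eq:diagram_IL_morph}, and uniqueness of \(\varprojlim\theta_\star\) is inherited verbatim from the uniqueness clause of the universal property.

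It then remains only to check functoriality. I would verify that the identity morphism of an inverse system (namely \(\theta_i={\rm id}_{\mathscr M_i}\) for all \(i\)) induces \({\rm id}_{\varprojlim\mathscr M_\star}\), and that \(\varprojlim\) respects composition of morphisms of inverse systems; both facts are immediate from the uniqueness in the universal property, since in each case the obvious candidate already satisfies the defining commuting relation.

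I do not expect a genuine obstacle here: the entire argument collapses to the single compatibility identity displayed above, whose only delicate point is keeping track of which relation is being used at each step -- the target identity \(\P_i=\P_{ij}\circ\P_j\) for the system \(\mathscr M\), and the morphism-of-systems square for the \(\theta\)'s. In particular, no separate completeness or finiteness estimate on the pointwise norm is needed, because these were already absorbed into the construction of \(\varprojlim\mathscr N_\star\) and its universal property in Theorem \ref{thm:IL_nor_mod}; the contractivity \(|\theta_i(\P_i v)|\leq|\P_i v|\leq|v|\) reappears only implicitly, through the fact that each \({\rm R}_i\) is a bona fide morphism of normed \(L^0(\mm)\)-modules.
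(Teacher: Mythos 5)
Your proposal is correct, and it takes a genuinely different (more categorical) route than the paper. The core computation is the same in both arguments: the compatibility identity \({\rm Q}_{ij}\circ\theta_j\circ\P_j=\theta_i\circ\P_{ij}\circ\P_j=\theta_i\circ\P_i\), obtained by combining the morphism-of-systems square with \(\P_i=\P_{ij}\circ\P_j\). Where you diverge is in how the limit map is then produced. You feed the compatible family of morphisms \({\rm R}_i=\theta_i\circ\P_i\) into the universal property of \(\big(\varprojlim\mathscr N_\star,\{{\rm Q}_i\}_{i\in I}\big)\) (item b) of the definition, established in Theorem \ref{thm:IL_nor_mod}), which delivers existence, uniqueness, and the fact that \(\varprojlim\theta_\star\) is a normed \(L^0(\mm)\)-module morphism all in one stroke; this is the standard proof that limits are functorial, and it would work verbatim in any category admitting inverse limits. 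The paper instead argues element-wise: for each fixed \(v\in\varprojlim\mathscr M_\star\) it forms the compatible family \(w_i\coloneqq\theta_i\big(\P_i(v)\big)\), observes that \({\rm ess\,sup}_{i\in I}|w_i|\leq|v|<+\infty\), and applies Remark \ref{rmk:IL_separable} (a by-product of the concrete construction of the inverse limit as families with finite essential supremum) to obtain the unique \(w\in\varprojlim\mathscr N_\star\) with \({\rm Q}_i(w)=w_i\); it then still has to check separately that the resulting map is a morphism. What each approach buys: yours is shorter and structurally cleaner, with no re-verification of linearity or contractivity needed; the paper's gives a concrete description of \(\varprojlim\theta_\star(v)\) as the element whose projections are \(\theta_i\big(\P_i(v)\big)\), together with the explicit pointwise-norm identity \(\big|\varprojlim\theta_\star(v)\big|={\rm ess\,sup}_{i\in I}\big|\theta_i\big(\P_i(v)\big)\big|\), which is the kind of quantitative handle the paper exploits elsewhere (e.g.\ in Proposition \ref{prop:IL_preserves_inj}). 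Your closing remarks on functoriality (identity and composition via the uniqueness clause) are also correct, and in fact slightly more complete than the paper, which leaves that point implicit.
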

\begin{proof}
Pick any \(v\in\varprojlim\mathscr M_\star\) and define
\(w_i\coloneqq\theta_i\big(\P_i(v)\big)\in\mathscr N_i\)
for all \(i\in I\). By \eqref{eq:inverse_system_norm_mod} we see that
\[{\rm Q}_{ij}(w_j)=({\rm Q}_{ij}\circ\theta_j)\big(\P_j(v)\big)
=(\theta_i\circ\P_{ij})\big(\P_j(v)\big)=\theta_i\big(\P_i(v)\big)
=w_i\quad\text{ for every }i,j\in I\text{ with }i\leq j.\]
Then there is a unique element
\(\big(\varprojlim\theta_\star\big)(v)=w\in\varprojlim\mathscr N_\star\)
such that \({\rm Q}_i(w)=w_i\) for every \(i\in I\), as observed in Remark
\ref{rmk:IL_separable}. One can readily check that the resulting map
\(\varprojlim\theta_\star\colon\varprojlim\mathscr M_\star\to\varprojlim\mathscr N_\star\)
is a morphism of normed \(L^0(\mm)\)-modules. Finally, it clearly holds that
\(\varprojlim\theta_\star\) is the unique morphism for which the diagram
\eqref{eq:diagram_IL_morph} is commutative for all \(i\in I\). Hence
the statement is achieved.
\end{proof}
\subsection{Main properties}
In this subsection we describe some important properties of inverse
limits in the category of normed \(L^0(\mm)\)-modules.
\begin{lemma}\label{lem:IL_trivial_case}
Let \(\mathscr M\neq\{0\}\) be a given normed \(L^0(\mm)\)-module.
Call \(\mathscr M_n\coloneqq\mathscr M\) for every \(n\in\N\).
For any \(n,m\in\N\) with \(n\leq m\), we define the morphism
\(\P_{nm}\colon\mathscr M_m\to\mathscr M_n\) as
\[\P_{nm}(v)\coloneqq\frac{n}{m}\,v\quad\text{ for every }v\in\mathscr M_m.\]
Then \(\big(\{\mathscr M_n\}_{n\in\N},\{\P_{nm}\}_{n\leq m}\big)\)
is an inverse system of normed \(L^0(\mm)\)-modules, with inverse limit
\[\varprojlim\mathscr M_\star=\{0\}.\]
\end{lemma}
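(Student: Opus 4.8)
The plan is to first confirm that the prescribed data genuinely constitute an inverse system and then to compute the inverse limit directly by means of the explicit construction given in the proof of Theorem \ref{thm:IL_nor_mod}. For the verification, I would observe that \(\P_{nn}(v)=\frac nn\,v=v\) is the identity of \(\mathscr M_n\), that for \(n\leq m\leq k\) one has \(\big(\P_{nm}\circ\P_{mk}\big)(v)=\frac nm\cdot\frac mk\,v=\frac nk\,v=\P_{nk}(v)\), and that each \(\P_{nm}\) is a morphism of normed \(L^0(\mm)\)-modules, since \(\big|\P_{nm}(v)\big|=\frac nm\,|v|\leq|v|\) holds \(\mm\)-a.e.\ because \(n\leq m\) forces \(\frac nm\leq 1\). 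This settles the first assertion of the statement.

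Next I would identify the underlying algebraic inverse limit \(\mathscr M_{\rm Alg}\) described in Subsection \ref{ss:lim_mod}. An element is a sequence \(v=\{v_n\}_{n\in\N}\) subject to the compatibility relation \(v_n=\P_{nm}(v_m)=\frac nm\,v_m\) for all \(n\leq m\). Fixing the first entry, this relation forces \(v_n=n\,v_1\) for every \(n\in\N\), and conversely any sequence of this form is admissible; hence \(\mathscr M_{\rm Alg}\) is parametrised bijectively by its first component \(v_1\in\mathscr M\), with \(\P'_n(v)=v_n=n\,v_1\).

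It then remains to evaluate the pointwise norm \eqref{eq:IL_def_ptwse_norm} and to read off the limit. Using the parametrisation just obtained, I would compute
\[|v|=\underset{n\in\N}{\rm ess\,sup\,}\big|\P'_n(v)\big|
=\underset{n\in\N}{\rm ess\,sup\,}|v_n|
=\underset{n\in\N}{\rm ess\,sup\,}n\,|v_1|.\]
The decisive observation — and really the only substantive point in the argument, the rest being routine — is that on the Borel set \(\{|v_1|>0\}\) this essential supremum is \(+\infty\), since \(n\,|v_1|\to+\infty\) there. Consequently, for \(v\) to lie in \(\varprojlim\mathscr M_\star=\big\{v\in\mathscr M_{\rm Alg}:|v|<+\infty\ \mm\text{-a.e.}\big\}\) one must have \(|v_1|=0\) \(\mm\)-a.e., which by the pointwise norm axiom means \(v_1=0\), whence \(v_n=n\,v_1=0\) for every \(n\) and therefore \(v=0\). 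This gives \(\varprojlim\mathscr M_\star=\{0\}\), as claimed; the hypothesis \(\mathscr M\neq\{0\}\) is not needed for the conclusion but serves to exhibit the collapse of a system of genuinely nonzero modules to the trivial inverse limit.
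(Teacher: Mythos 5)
Your proposal is correct and follows essentially the same route as the paper's proof: identify the algebraic inverse limit as the sequences \((n\,v_1)_{n\in\N}\) parametrised by the first entry, compute the pointwise norm \eqref{eq:IL_def_ptwse_norm} as \({\rm ess\,sup}_n\, n|v_1|=+\infty\cdot\nchi_{\{|v_1|>0\}}\), and conclude that only \(v_1=0\) survives. The only difference is that you spell out the inverse-system axioms and the parametrisation, which the paper leaves as immediate; your closing remark that \(\mathscr M\neq\{0\}\) is needed only to make the example nontrivial is also accurate.
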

\begin{proof}
It immediately follows from its very definition that
\(\big(\{\mathscr M_n\}_{n\in\N},\{\P_{nm}\}_{n\leq m}\big)\)
is an inverse system of normed \(L^0(\mm)\)-modules.
Moreover, its inverse limit \(\big(\mathscr M_{\rm Alg},\{\P'_n\}_{n\in\N}\big)\)
in the category of algebraic \(L^0(\mm)\)-modules is given by
\[\begin{split}
\mathscr M_{\rm Alg}&=\Big\{(nv)_{n\in\N}\in\prod\nolimits_{n\in\N}\mathscr M_n
\;\Big|\;v\in\mathscr M\Big\},\\
\P'_m\big((nv)_{n\in\N}\big)&=mv\quad\text{ for every }m\in\N
\text{ and }(nv)_{n\in\N}\in\mathscr M_{\rm Alg}.
\end{split}\]
Therefore for any element \(v\in\mathscr M_{\rm Alg}\) it \(\mm\)-a.e.\ holds that
\[|v|=\underset{n\in\N}{\rm ess\,sup\,}|nv|=+\infty\cdot\nchi_{\{|v|>0\}},\]
whence \(\varprojlim\mathscr M_\star=\{0\}\). This proves the statement.
\end{proof}

The category of inverse systems of normed \(L^0(\mm)\)-modules
is a pointed category, whose zero object is the inverse system
\(\big(\{\mathscr M_i\}_{i\in I},\{\P_{ij}\}_{i\leq j}\big)\)
given by \(\mathscr M_i\coloneqq\{0\}\) for all \(i\in I\) and \(\P_{ij}\coloneqq 0\)
for all \(i,j\in I\) with \(i\leq j\). Given a morphism \(\Theta=\{\theta_i\}_{i\in I}\)
of two inverse systems \(\big(\{\mathscr M_i\}_{i\in I},\{\P_{ij}\}_{i\leq j}\big)\)
and \(\big(\{\mathscr N_i\}_{i\in I},\{{\rm Q}_{ij}\}_{i\leq j}\big)\) of normed
\(L^0(\mm)\)-modules, it holds that:
\begin{itemize}
\item[\(\rm a)\)] The kernel \({\rm ker}(\Theta)\) of \(\Theta\) is given by
\(\big(\big\{{\rm ker}(\theta_i)\big\}_{i\in I},
\big\{\P_{ij}\restr{{\rm ker}(\theta_j)}\big\}_{i\leq j}\big)\).
\item[\(\rm b)\)] The image \({\rm im}(\Theta)\) of \(\Theta\) is given by
\(\big(\big\{{\rm im}(\theta_i)\big\}_{i\in I},
\big\{{\rm Q}_{ij}\restr{{\rm im}(\theta_j)}\big\}_{i\leq j}\big)\).
\end{itemize}
Items a) and b) make sense, as
\(\P_{ij}\big({\rm ker}(\theta_j)\big)\subseteq{\rm ker}(\theta_i)\)
and \({\rm Q}_{ij}\big({\rm im}(\theta_j)\big)\subseteq{\rm im}(\theta_i)\)
whenever \(i\leq j\).
\begin{proposition}\label{prop:IL_preserves_inj}
Let \(\Theta=\{\theta_i\}_{i\in I}\) be a morphism between inverse systems
\(\big(\{\mathscr M_i\}_{i\in I},\{\P_{ij}\}_{i\leq j}\big)\) and
\(\big(\{\mathscr N_i\}_{i\in I},\{{\rm Q}_{ij}\}_{i\leq j}\big)\)
of normed \(L^0(\mm)\)-modules such that \({\rm ker}(\Theta)=0\).
Then \({\rm ker}\big(\varprojlim\theta_\star\big)=0\).
\end{proposition}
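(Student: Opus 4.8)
The plan is to unwind the hypothesis \(\ker(\Theta)=0\) and then exploit the elementary fact that the natural projections of an inverse limit separate points. First I would read off from the explicit description of \(\ker(\Theta)\) recalled just above the statement that the condition \(\ker(\Theta)=0\) means precisely that \(\ker(\theta_i)=\{0\}\) for every \(i\in I\); in other words, each component morphism \(\theta_i\colon\mathscr M_i\to\mathscr N_i\) is injective.

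Next I would pick an arbitrary \(v\in\ker\big(\varprojlim\theta_\star\big)\) and show that \(v=0\). Writing \(\theta\) for \(\varprojlim\theta_\star\), the assumption \(\theta(v)=0\) together with the commutativity of the diagram \eqref{eq:diagram_IL_morph} gives
\[
\theta_i\big(\P_i(v)\big)={\rm Q}_i\big(\theta(v)\big)={\rm Q}_i(0)=0\quad\text{ for every }i\in I.
\]
Since each \(\theta_i\) is injective, this forces \(\P_i(v)=0\) for every \(i\in I\).

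Finally I would conclude by appealing to the construction of the inverse limit in Theorem \ref{thm:IL_nor_mod}: the module \(\varprojlim\mathscr M_\star\) is contained in the algebraic inverse limit \(\mathscr M_{\rm Alg}\subseteq\prod_{i\in I}\mathscr M_i\), and the natural projections \(\P_i\) are nothing but the restrictions of the coordinate maps \(v\mapsto v_i\). Hence \(\P_i(v)=0\) for all \(i\in I\) means that every coordinate of \(v\) vanishes, so \(v=0\); this establishes \(\ker\big(\varprojlim\theta_\star\big)=0\). I do not expect a genuine obstacle here: in contrast to the direct-limit counterpart (Proposition \ref{prop:DL_preserves_surj}), no density or completion argument intervenes. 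The only points demanding a little care are correctly interpreting \(\ker(\Theta)=0\) as the injectivity of each \(\theta_i\) (rather than some weaker condition at the level of inverse limits) and recalling that an element of an inverse limit is uniquely determined by its projections; both are immediate from the definitions already in place.
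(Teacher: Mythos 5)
Your proof is correct and follows essentially the same route as the paper: unwind \(\ker(\Theta)=0\) into injectivity of each \(\theta_i\), chase the diagram \eqref{eq:diagram_IL_morph} to get \(\P_i(v)=0\) for all \(i\), and conclude \(v=0\). The only cosmetic difference is in the last step, where the paper cites Remark \ref{rmk:IL_separable} while you appeal directly to the construction of \(\varprojlim\mathscr M_\star\) as a submodule of \(\prod_{i\in I}\mathscr M_i\); these are the same fact.
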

\begin{proof}
Pick \(v\in\varprojlim\mathscr M_\star\) with
\(\big(\varprojlim\theta_\star\big)(v)=0\). This implies
\(\theta_i\big(\P_i(v)\big)={\rm Q}_i(0)=0\) for every \(i\in I\) by 
\eqref{eq:diagram_IL_morph}, whence \(\P_i(v)=0\) as \({\rm ker}(\theta_i)=0\)
by assumption. Then \(v=0\) by Remark \ref{rmk:IL_separable}.
\end{proof}
\begin{remark}{\rm
The dual statement of that of Proposition \ref{prop:IL_preserves_inj} is false,
as one can build a morphism of inverse systems \(\Theta=\{\theta_i\}_{i\in I}\) with
\({\rm im}(\Theta)=\big(\{\mathscr N_i\}_{i\in I},\{{\rm Q}_{ij}\}_{i\leq j}\big)\) such that \({\rm im}\big(\varprojlim\theta_\star\big)\neq\varprojlim\mathscr N_\star\).

For instance, fix any normed \(L^0(\mm)\)-module \(\mathscr M\neq\{0\}\).
Let us define the inverse systems of normed \(L^0(\mm)\)-modules
\(\big(\{\mathscr M_n\}_{n\in\N},\{\P_{nm}\}_{n\leq m}\big)\) and
\(\big(\{\mathscr N_n\}_{n\in\N},\{{\rm Q}_{nm}\}_{n\leq m}\big)\) as follows:
\[\begin{split}
\mathscr M_n=\mathscr N_n\coloneqq\mathscr M&\quad\text{ for every }n\in\N,\\
\P_{nm}(v)\coloneqq\frac{n}{m}\,v&\quad\text{ for every }n\leq m
\text{ and }v\in\mathscr M_m,\\
{\rm Q}_{nm}(w)\coloneqq w&\quad\text{ for every }n\leq m
\text{ and }w\in\mathscr N_m.
\end{split}\]
The morphism \(\Theta=\{\theta_n\}_{n\in\N}\) between 
\(\big(\{\mathscr M_n\}_{n\in\N},\{\P_{nm}\}_{n\leq m}\big)\) and
\(\big(\{\mathscr N_n\}_{n\in\N},\{{\rm Q}_{nm}\}_{n\leq m}\big)\)
we consider is given by
\[\theta_n(v)\coloneqq\frac{1}{n}\,v
\quad\text{ for every }n\in\N\text{ and }v\in\mathscr M_n.\]
Therefore \(\varprojlim\mathscr M_\star=\{0\}\) by Lemma \ref{lem:IL_trivial_case}
and \(\varprojlim\mathscr N_\star=\mathscr M\). This yields the desired
counterexample, as all the maps \(\theta_n\) are surjective but
\({\rm im}\big(\varprojlim\theta_\star\big)=\{0\}\neq\mathscr M\).
\fr}\end{remark}
\begin{lemma}\label{lem:IL_I_bdd}
Suppose that the directed set \((I,\leq)\) admits a greatest
element \(m\in I\). Then for any inverse system
\(\big(\{\mathscr M_i\}_{i\in I},\{\P_{ij}\}_{i\leq j}\big)\)
of normed \(L^0(\mm)\)-modules it holds that
\begin{equation}\label{eq:IL_I_bdd}
\big(\mathscr M_m,\{\P_{im}\}_{i\in I}\big)
\text{ is the inverse limit of }
\big(\{\mathscr M_i\}_{i\in I},\{\P_{ij}\}_{i\leq j}\big).
\end{equation}
In particular, given any morphism \(\Theta=\{\theta_i\}_{i\in I}\) between
two inverse systems \(\big(\{\mathscr M_i\}_{i\in I},\{\P_{ij}\}_{i\leq j}\big)\)
and \(\big(\{\mathscr N_i\}_{i\in I},\{{\rm Q}_{ij}\}_{i\leq j}\big)\) of
normed \(L^0(\mm)\)-modules, it holds that \(\varprojlim\theta_\star=\theta_m\).
\end{lemma}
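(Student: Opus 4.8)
The plan is to mirror the proof of Lemma \ref{lem:DL_I_bdd}, exploiting that a greatest element of the index set trivialises the limit. The statement has two assertions: first, that \(\big(\mathscr M_m,\{\P_{im}\}_{i\in I}\big)\) is the inverse limit; and second, the consequence that \(\varprojlim\theta_\star=\theta_m\) for a morphism of inverse systems. I would treat these in order.

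For the first assertion, I would begin by verifying property a) of the inverse limit, namely that \(\P_{ij}\circ\P_{jm}=\P_{im}\) holds whenever \(i\leq j\) (\(\leq m\)). This is immediate from the inverse-system axiom \(\P_{ik}=\P_{ij}\circ\P_{jk}\) applied with \(k=m\), using that \(i,j\leq m\) since \(m\) is the greatest element. Thus \(\big(\mathscr M_m,\{\P_{im}\}_{i\in I}\big)\) satisfies the compatibility diagram. For the universal property b), I would fix an arbitrary couple \(\big(\mathscr N,\{{\rm Q}_i\}_{i\in I}\big)\) satisfying \({\rm Q}_i=\P_{ij}\circ{\rm Q}_j\) for \(i\leq j\). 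The candidate morphism \(\Phi\colon\mathscr N\to\mathscr M_m\) is forced to be \(\Phi\coloneqq{\rm Q}_m\): indeed, any \(\Phi\) making the defining triangle commute must satisfy \(\P_m\circ\Phi=\P_{mm}\circ\Phi=\Phi={\rm Q}_m\), since \(\P_{mm}\) is the identity of \(\mathscr M_m\). Conversely, setting \(\Phi={\rm Q}_m\) works: for every \(i\in I\) we have \(\P_{im}\circ{\rm Q}_m={\rm Q}_i\) precisely by the compatibility hypothesis on the \({\rm Q}_i\) (taking \(j=m\)). This yields existence and uniqueness of the factorisation, establishing \eqref{eq:IL_I_bdd}.

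For the second assertion, I would apply the uniqueness part of the inverse limit functor to the identification just obtained. Under the identification \(\varprojlim\mathscr M_\star\cong\mathscr M_m\) with projections \(\P_{im}\), and likewise \(\varprojlim\mathscr N_\star\cong\mathscr N_m\) with \({\rm Q}_{im}\), the commuting diagram \eqref{eq:diagram_IL_morph} defining \(\varprojlim\theta_\star\) reads \({\rm Q}_{im}\circ\varprojlim\theta_\star=\theta_i\circ\P_{im}\) for every \(i\in I\). Specialising to \(i=m\), and using that \({\rm Q}_{mm}\) and \(\P_{mm}\) are the respective identities, gives \(\varprojlim\theta_\star=\theta_m\) directly. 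Since \(\varprojlim\theta_\star\) is characterised as the unique such morphism, this identification is complete.

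I do not expect any genuine obstacle here: the argument is purely formal and relies only on the inverse-system axioms and the fact that \(\P_{mm}\), \({\rm Q}_{mm}\) are identities. The only point requiring mild care is bookkeeping the direction of the projection arrows (which point \emph{downward} from the limit, unlike the canonical morphisms of a direct limit), so that the factorisation \(\Phi={\rm Q}_m\) is read off correctly rather than its formal dual. This is entirely analogous to—indeed, the inverse-limit counterpart of—the reasoning in Lemma \ref{lem:DL_I_bdd}.
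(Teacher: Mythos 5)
Your proof is correct and follows essentially the same route as the paper's: both identify \(\Phi\coloneqq{\rm Q}_m\) as the unique morphism factoring any compatible couple \(\big(\mathscr N,\{{\rm Q}_i\}_{i\in I}\big)\) through \(\mathscr M_m\), using that \(\P_{mm}\) is the identity. You merely spell out two steps the paper leaves implicit — the compatibility check \(\P_{im}=\P_{ij}\circ\P_{jm}\) and the deduction \(\varprojlim\theta_\star=\theta_m\) from the \(i=m\) instance of the diagram \eqref{eq:diagram_IL_morph} — which is fine.
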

\begin{proof}
Fix any couple \(\big(\mathscr N,\{{\rm Q}_i\}_{i\in I}\big)\)
-- where \(\mathscr N\) is a normed \(L^0(\mm)\)-module and
\({\rm Q}_i\colon\mathscr N\to\mathscr M_i\) are morphisms --
satisfying \({\rm Q}_i=\P_{ij}\circ{\rm Q}_j\) for all \(i,j\in I\)
with \(i\leq j\). Hence \(\Phi\coloneqq{\rm Q}_m\) is the unique morphism from
\(\mathscr N\) to \(\mathscr M_m\) such that \({\rm Q}_i=\P_{im}\circ\Phi\)
holds for every \(i\in I\), which proves \eqref{eq:IL_I_bdd}.
\end{proof}
\begin{remark}\label{rmk:IL_no_full_faithful}{\rm
By slightly modifying the examples provided in Remark \ref{rmk:DL_no_full_faithful},
it can be readily checked that the inverse limit functor \(\varprojlim\) is neither
faithul nor full.
\fr}\end{remark}
\begin{proposition}\label{prop:IDL_vs_dual}
Let \(\big(\{\mathscr M_i\}_{i\in I},\{\varphi_{ij}\}_{i\leq j}\big)\)
be a direct system of normed \(L^0(\mm)\)-modules, whose direct limit
is denoted by \(\big(\varinjlim\mathscr M_\star,\{\varphi_i\}_{i\in I}\big)\).
Given any normed \(L^0(\mm)\)-module \(\mathscr N\) and \(i,j\in I\)
with \(i\leq j\), we define the morphism \(\P_{ij}\colon
\textsc{Hom}(\mathscr M_j,\mathscr N)\to\textsc{Hom}(\mathscr M_i,\mathscr N)\) as
\[\P_{ij}(T)\coloneqq T\circ\varphi_{ij}
\quad\text{ for every }T\in\textsc{Hom}(\mathscr M_j,\mathscr N).\]
Then \(\big(\big\{\textsc{Hom}(\mathscr M_i,\mathscr N)\big\}_{i\in I},
\{\P_{ij}\}_{i\leq j}\big)\) is an inverse system of normed
\(L^0(\mm)\)-modules. Moreover, it holds that
\[\varprojlim\textsc{Hom}(\mathscr M_\star,\mathscr N)\cong
\textsc{Hom}\big(\varinjlim\mathscr M_\star,\mathscr N\big),\]
the natural projections
\(\P_i\colon\textsc{Hom}\big(\varinjlim\mathscr M_\star,\mathscr N\big)
\to\textsc{Hom}(\mathscr M_i,\mathscr N)\) being defined as
\(\P_i(T)\coloneqq T\circ\varphi_i\) for every \(i\in I\) and
\(T\in\textsc{Hom}\big(\varinjlim\mathscr M_\star,\mathscr N\big)\).
\end{proposition}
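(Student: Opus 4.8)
The plan is to verify directly the universal property characterising the inverse limit, reducing everything to the explicit model of $\varinjlim\mathscr M_\star$ produced in the proof of Theorem~\ref{thm:DL_nor_mod}. First I would check that $\big(\{\textsc{Hom}(\mathscr M_i,\mathscr N)\}_{i\in I},\{\P_{ij}\}_{i\leq j}\big)$ is an inverse system: this is just the adjoint construction of \eqref{eq:adjoint_map}. Each $\P_{ij}$ is $L^0(\mm)$-linear, since precomposition with the fixed map $\varphi_{ij}$ commutes with the pointwise operations on $\textsc{Hom}$, and it is norm-contracting because $\big|\P_{ij}(T)(v)\big|=\big|T(\varphi_{ij}(v))\big|\leq|T|\,|\varphi_{ij}(v)|\leq|T|\,|v|$ for every $v$ with $|v|\leq 1$, so that $|\P_{ij}(T)|\leq|T|$ after taking the essential supremum in \eqref{eq:ptwse_norm_T}. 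The identities $\P_{ii}=\mathrm{id}$ and $\P_{ij}\circ\P_{jk}=\P_{ik}$ follow at once from $\varphi_{ii}=\mathrm{id}$ and $\varphi_{jk}\circ\varphi_{ij}=\varphi_{ik}$. The same computation shows that the candidate projections $\P_i(T)\coloneqq T\circ\varphi_i$ are morphisms, while the target relation $\varphi_j\circ\varphi_{ij}=\varphi_i$ yields $\P_{ij}\circ\P_j=\P_i$; hence $\big(\textsc{Hom}(\varinjlim\mathscr M_\star,\mathscr N),\{\P_i\}_{i\in I}\big)$ is a cone over the inverse system.

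The heart of the matter is the universal property. I would fix an arbitrary couple $\big(\mathscr P,\{{\rm Q}_i\}_{i\in I}\big)$, with ${\rm Q}_i\colon\mathscr P\to\textsc{Hom}(\mathscr M_i,\mathscr N)$ morphisms satisfying ${\rm Q}_i=\P_{ij}\circ{\rm Q}_j$ for $i\leq j$, and construct the comparison morphism pointwise in $w\in\mathscr P$. For such a fixed $w$, the compatibility relation unwinds to ${\rm Q}_j(w)\circ\varphi_{ij}={\rm Q}_i(w)$, i.e.\ the family $\{{\rm Q}_i(w)\}_{i\in I}$ is compatible with the direct system in the purely algebraic sense. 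Applying the algebraic direct limit of Subsection~\ref{ss:lim_mod} to $\big(\mathscr M_{\rm Alg},\{\varphi'_i\}_{i\in I}\big)$, I obtain a unique $L^0(\mm)$-linear map $\Phi'(w)\colon\mathscr M_{\rm Alg}\to\mathscr N$ with $\Phi'(w)\circ\varphi'_i={\rm Q}_i(w)$ for every $i\in I$.

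The key estimate I would then establish is that $\Phi'(w)$ is bounded by $|w|$ relative to the seminorm \eqref{eq:ptwse_norm_on_M_Alg}. Indeed, for $\boldsymbol v\in\mathscr M_{\rm Alg}$ and any representation $\varphi'_i(v)=\boldsymbol v$ (which exists by \eqref{eq:M_Alg_surj}) one has $\big|\Phi'(w)(\boldsymbol v)\big|=\big|{\rm Q}_i(w)(v)\big|\leq\big|{\rm Q}_i(w)\big|\,|v|\leq|w|\,|v|$, because ${\rm Q}_i$ is a morphism; taking the essential infimum over all such representations yields $\big|\Phi'(w)(\boldsymbol v)\big|\leq|w|\,|\boldsymbol v|$ by \eqref{eq:ptwse_norm_on_M_Alg}. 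This single inequality does the bulk of the work: since $|\cdot|$ is a genuine pointwise norm on $\mathscr N$, it shows that $\Phi'(w)$ annihilates the kernel of the metric identification $\sim$, hence descends to $\mathscr M_{\rm Alg}/\!\sim$, and that it is uniformly continuous there, hence extends uniquely to a continuous $L^0(\mm)$-linear map $\Phi(w)\colon\varinjlim\mathscr M_\star\to\mathscr N$ with $|\Phi(w)|\leq|w|$. Thus $\Phi(w)\in\textsc{Hom}(\varinjlim\mathscr M_\star,\mathscr N)$ and $\P_i\big(\Phi(w)\big)=\Phi(w)\circ\varphi_i={\rm Q}_i(w)$ by construction.

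Finally I would package this into a single morphism $\Phi\colon w\mapsto\Phi(w)$. Its $L^0(\mm)$-linearity follows by uniqueness: for example $\Phi(w)+\Phi(w')$ and $\Phi(w+w')$ both compose with each $\varphi_i$ to give ${\rm Q}_i(w)+{\rm Q}_i(w')={\rm Q}_i(w+w')$, so they agree on the dense set $\bigcup_{i\in I}\varphi_i(\mathscr M_i)$ (Remark~\ref{rmk:DL_separable}) and hence coincide, the scalar case being identical. The bound $|\Phi(w)|\leq|w|$ from the previous paragraph shows that $\Phi$ is a morphism, and $\P_i\circ\Phi={\rm Q}_i$ holds by construction; uniqueness of $\Phi$ follows from the same density argument, as any competitor $\widetilde\Phi$ with $\P_i\circ\widetilde\Phi={\rm Q}_i$ must satisfy $\widetilde\Phi(w)\circ\varphi_i={\rm Q}_i(w)$ for every $i$, which determines $\widetilde\Phi(w)$ on $\bigcup_{i\in I}\varphi_i(\mathscr M_i)$ and therefore everywhere. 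This verifies the universal property and yields the claimed isomorphism with the stated natural projections. I expect the main obstacle to be precisely the norm estimate of the third paragraph: it is the only point where the interplay between the essential-infimum seminorm \eqref{eq:ptwse_norm_on_M_Alg} and the contraction property of the ${\rm Q}_i$ is exploited, yet once it is in place the well-posedness, continuity, morphism bound and compatibility with the metric identification all follow simultaneously.
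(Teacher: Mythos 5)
Your argument is correct, but it follows a genuinely different route from the paper's. The paper treats \(\varinjlim\mathscr M_\star\) as a black box and invokes only its universal property in the normed category; the obstacle there is that the maps \({\rm Q}_i(w)\in\textsc{Hom}(\mathscr M_i,\mathscr N)\) are not categorical morphisms (they satisfy \(\big|{\rm Q}_i(w)\big|\leq|w|\) rather than \(\leq 1\)), so the paper rescales: setting \(f_w\coloneqq\nchi_{\{|w|>0\}}\frac{1}{|w|}\), the contractions \(f_w\cdot{\rm Q}_i(w)\) make \(\big(\mathscr N,\{f_w\cdot{\rm Q}_i(w)\}_{i\in I}\big)\) a target, Theorem \ref{thm:DL_nor_mod} yields a morphism \(\Phi_0(w)\colon\varinjlim\mathscr M_\star\to\mathscr N\), and one sets \(\Phi(w)\coloneqq|w|\cdot\Phi_0(w)\) (nothing is lost on \(\{|w|=0\}\), since \(\big|{\rm Q}_i(w)\big|\leq|w|\) forces \({\rm Q}_i(w)\) to vanish there). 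You instead reopen the construction of the direct limit -- algebraic limit, seminorm \eqref{eq:ptwse_norm_on_M_Alg}, metric identification, completion -- and prove the weighted bound \(\big|\Phi'(w)(\boldsymbol v)\big|\leq|w|\,|\boldsymbol v|\) directly from the essential-infimum formula; this is precisely the ``weighted universal property'' that the paper's rescaling trick recovers from the contraction-only one. What each approach buys: yours isolates the single inequality doing all the work and avoids the ad hoc factor \(f_w\), but it is tied to the particular model of \(\varinjlim\mathscr M_\star\) built in Theorem \ref{thm:DL_nor_mod} (harmless, by uniqueness of direct limits up to unique isomorphism, but worth saying); the paper's is model-independent and shorter once Theorem \ref{thm:DL_nor_mod} is available. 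Two minor points you should make explicit: pulling the factor \(|w|\) out of the essential infimum in \eqref{eq:ptwse_norm_on_M_Alg} uses that an essential infimum is realised along a countable subfamily, and the uniform continuity of the descended map on \(\mathscr M_{\rm Alg}/\sim\) needs a word because \(|w|\) may be unbounded (split \(\X\) along \(\{|w|\leq M\}\) with \(\mm'\big(\{|w|>M\}\big)\) small); both are standard and consistent with the level of detail of Lemma \ref{lem:metric_completion} and Theorem \ref{thm:DL_nor_mod}.
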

\begin{proof}
Given any \(i,j,k\in I\) with \(i\leq j\leq k\) and
\(T\in\textsc{Hom}(\mathscr M_k,\mathscr N)\), it holds that
\[\P_{ik}(T)=T\circ\varphi_{ik}=T\circ\varphi_{jk}\circ\varphi_{ij}
=\P_{ij}(T\circ\varphi_{jk})=(\P_{ij}\circ\P_{jk})(T),\]
whence \(\big(\big\{\textsc{Hom}(\mathscr M_i,\mathscr N)\big\}_{i\in I},
\{\P_{ij}\}_{i\leq j}\big)\) is an inverse system. Analogously,
\(\P_i=\P_{ij}\circ\P_j\) holds for all \(i,j\in I\) with
\(i\leq j\), thus to conclude it only remains to show that
\(\big(\textsc{Hom}\big(\varinjlim\mathscr M_\star,\mathscr N\big),
\{\P_i\}_{i\in I}\big)\) satisfies the universal property defining the inverse limit.
Fix any \(\big(\mathscr O,\{{\rm Q}_i\}_{i\in I}\big)\),
where \(\mathscr O\) is a normed \(L^0(\mm)\)-module, while the morphisms
\({\rm Q}_i\colon\mathscr O\to\textsc{Hom}(\mathscr M_i,\mathscr N)\)
satisfy \({\rm Q}_i=\P_{ij}\circ{\rm Q}_j\) for every \(i,j\in I\) with
\(i\leq j\). Given any element \(w\in\mathscr O\), we consider the
family \(\big\{{\rm Q}_i(w)\big\}_{i\in I}\). Call \(f_w\) the function
\(\nchi_{\{|w|>0\}}\frac{1}{|w|}\in L^0(\mm)\) and observe that the
morphisms \(f_w\cdot{\rm Q}_i(w)\colon\mathscr M_i\to\mathscr N\) satisfy
\[\begin{split}
\big(f_w\cdot{\rm Q}_i(w)\big)(v)&=f_w\cdot{\rm Q}_i(w)(v)
=f_w\cdot\P_{ij}\big({\rm Q}_j(w)\big)(v)
=f_w\cdot\big({\rm Q}_j(w)\circ\varphi_{ij}\big)(v)\\
&=f_w\cdot{\rm Q}_j(w)\big(\varphi_{ij}(v)\big)
=\big(f_w\cdot{\rm Q}_j(w)\big)\big(\varphi_{ij}(v)\big)
\end{split}\]
for every \(i,j\in I\) with \(i\leq j\) and \(v\in\mathscr M_i\). This shows
that \(\big(\mathscr N,\big\{f_w\cdot{\rm Q}_i(w)\big\}_{i\in I}\big)\) is a target for
the direct system \(\big(\{\mathscr M_i\}_{i\in I},\{\varphi_{ij}\}_{i\leq j}\big)\),
whence there exists a unique morphism
\(\Phi_0(w)\colon\varinjlim\mathscr M_\star\to\mathscr N\) such that
\(f_w\cdot{\rm Q}_i(w)(v)=\Phi_0(w)\big(\varphi_i(v)\big)\) holds for every
\(i\in I\) and \(v\in\mathscr M_i\), thus accordingly the element
\(\Phi(w)\coloneqq|w|\cdot\Phi_0(w)\in
\textsc{Hom}\big(\varinjlim\mathscr M_\star,\mathscr N\big)\) satisfies
\({\rm Q}_i(w)(v)=\Phi(w)\big(\varphi_i(v)\big)=\P_i\big(\Phi(w)\big)(v)\)
for all \(i\in I\) and \(v\in\mathscr M_i\). Hence
\(\Phi\colon\mathscr O\to\textsc{Hom}\big(\varinjlim\mathscr M_\star,\mathscr N\big)\)
is the unique morphism such that
\[\begin{tikzcd}
\mathscr O \arrow[r,"\Phi"] \arrow[rd,swap,"{\rm Q}_i"] &
\textsc{Hom}\big(\varinjlim\mathscr M_\star,\mathscr N\big) \arrow[d,"\P_i"] \\
& \textsc{Hom}(\mathscr M_i,\mathscr N)
\end{tikzcd}\]
is a commutative diagram for any \(i\in I\). Then the statement is achieved.
\end{proof}
\begin{corollary}\label{cor:IDL_vs_dual}
Let \(\big(\{\mathscr M_i\}_{i\in I},\{\varphi_{ij}\}_{i\leq j}\big)\)
be a direct system of normed \(L^0(\mm)\)-modules, whose direct limit
is denoted by \(\big(\varinjlim\mathscr M_\star,\{\varphi_i\}_{i\in I}\big)\).
Then \(\big(\{\mathscr M^*_i\}_{i\in I},\{\varphi^{\rm adj}_{ij}\}_{i\leq j}\big)\)
is an inverse system of normed \(L^0(\mm)\)-modules (cf.\ \eqref{eq:adjoint_map}
for the definition of the adjoint \(\varphi^{\rm adj}_{ij}\)). Moreover, it holds that
\begin{equation}\label{eq:IDL_vs_dual}
\varprojlim\mathscr M^*_\star\cong\big(\varinjlim\mathscr M_\star\big)^*,
\end{equation}
the natural projections being given by
\(\varphi^{\rm adj}_i\colon\big(\varinjlim\mathscr M_\star\big)^*\to\mathscr M^*_i\)
for every \(i\in I\).
\end{corollary}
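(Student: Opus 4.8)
The plan is to recognise the statement as the special case $\mathscr N = L^0(\mm)$ of Proposition~\ref{prop:IDL_vs_dual}. By definition of the dual module one has $\mathscr M_i^* = \textsc{Hom}\big(\mathscr M_i, L^0(\mm)\big)$ and $\big(\varinjlim\mathscr M_\star\big)^* = \textsc{Hom}\big(\varinjlim\mathscr M_\star, L^0(\mm)\big)$, while $L^0(\mm)$ is itself a normed $L^0(\mm)$-module. Hence the entire content of the corollary should follow by instantiating the previous proposition with this particular target.

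The only thing to check is that the structure maps match. First I would verify that the morphisms $\P_{ij}(T) = T\circ\varphi_{ij}$ appearing in Proposition~\ref{prop:IDL_vs_dual} coincide, once $\mathscr N = L^0(\mm)$, with the adjoints $\varphi^{\rm adj}_{ij}$ of \eqref{eq:adjoint_map}; this is immediate, since $\varphi^{\rm adj}_{ij}(\omega) = \omega\circ\varphi_{ij}$ for every $\omega\in\mathscr M_j^*$. In particular, this identification shows that $\big(\{\mathscr M^*_i\}_{i\in I},\{\varphi^{\rm adj}_{ij}\}_{i\leq j}\big)$ is an inverse system, as required by the first assertion, because it is precisely the inverse system $\big(\big\{\textsc{Hom}(\mathscr M_i, L^0(\mm))\big\}_{i\in I},\{\P_{ij}\}_{i\leq j}\big)$ produced by the proposition. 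Analogously, the natural projections $\P_i(T) = T\circ\varphi_i$ furnished there are exactly the adjoints $\varphi^{\rm adj}_i$ of the canonical morphisms $\varphi_i\colon\mathscr M_i\to\varinjlim\mathscr M_\star$.

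Invoking Proposition~\ref{prop:IDL_vs_dual} with $\mathscr N = L^0(\mm)$ then yields $\varprojlim\textsc{Hom}(\mathscr M_\star, L^0(\mm)) \cong \textsc{Hom}\big(\varinjlim\mathscr M_\star, L^0(\mm)\big)$ with natural projections $\varphi^{\rm adj}_i$, which is exactly the content of \eqref{eq:IDL_vs_dual}. Since the argument is a pure specialisation, I expect no genuine obstacle: all the analytic input — completeness of the inverse limit, the essential-supremum pointwise norm, and the universal-property construction via the rescaling function $f_w$ — has already been absorbed into the proof of the proposition, and only the elementary bookkeeping identifying $\P_{ij}$ and $\P_i$ with the adjoints remains.
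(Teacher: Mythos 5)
Your proposal is correct and coincides with the paper's own proof, which reads in its entirety: ``Just apply Proposition \ref{prop:IDL_vs_dual} with \(\mathscr N\coloneqq L^0(\mm)\).'' The extra bookkeeping you carry out --- identifying \(\P_{ij}\) with \(\varphi^{\rm adj}_{ij}\) and \(\P_i\) with \(\varphi^{\rm adj}_i\) --- is exactly the implicit content of that one-line specialisation.
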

\begin{proof}
Just apply Proposition \ref{prop:IDL_vs_dual} with \(\mathscr N\coloneqq L^0(\mm)\).
\end{proof}
\begin{remark}[Pullback and inverse limit do not commute]\label{rmk:pullback_IL}{\rm
Let \((\X,\sfd_\X,\mm_\X)\), \((\Y,\sfd_\Y,\mm_\Y)\) be metric measure spaces.
Let \(f\colon\X\to\Y\) be a Borel map with \(f_*\mm_\X\ll\mm_\Y\).
Let \(\big(\{\mathscr M_i\}_{i\in I},\{\P_{ij}\}_{i\leq j}\big)\) be an inverse
system of normed \(L^0(\mm_\Y)\)-modules, whose inverse limit is denoted by
\(\big(\varprojlim\mathscr M_\star,\{\P_i\}_{i\in I}\big)\). Then
\(\big(\{f^*\mathscr M_i\}_{i\in I},\{f^*\P_{ij}\}_{i\leq j}\big)\) is
an inverse system of normed \(L^0(\mm_\X)\)-modules, as the diagram
\[\begin{tikzcd}
\varprojlim\mathscr M_\star \arrow[r,"\P_k"] \arrow[d,swap,"f^*"] &
\mathscr M_k \arrow[r,"\P_{jk}"] \arrow[d,swap,"f^*"] &
\mathscr M_j \arrow[r,"\P_{ij}"] \arrow[d,swap,"f^*"] &
\mathscr M_i \arrow[d,swap,"f^*"] \\
f^*\varprojlim\mathscr M_\star \arrow[r,swap,"f^*\P_k"] & f^*\mathscr M_k
\arrow[r,swap,"f^*\P_{jk}"] & f^*\mathscr M_j
\arrow[r,swap,"f^*\P_{ij}"] & f^*\mathscr M_i
\end{tikzcd}\]
commutes for every \(i,j,k\in I\) with \(i\leq j\leq k\) by
Theorem \ref{thm:pullback}. Nevertheless, it might happen that
\[\varprojlim f^*\mathscr M_\star\ncong f^*\varprojlim\mathscr M_\star.\]
For instance, consider the constant map \(\pi\colon[0,1]\to\{0\}\),
where the domain is endowed with the Euclidean distance and the
restricted Lebesgue measure \(\mathcal L_1=\mathcal L^1\restr{[0,1]}\),
while the target is endowed with the trivial distance and the Dirac delta
measure \(\delta_0\). Notice that trivially \(\pi_*\mathcal L_1\ll\delta_0\).
Moreover, by combining Example \ref{ex:special_case_pullback} with
Example \ref{ex:Banach_are_NM} we deduce that
\begin{equation}\label{eq:pullback_special_case}
\pi^*\B\cong L^0\big([0,1],\B\big)\quad\text{ for every Banach space }\B.
\end{equation}
Now consider the Banach space \(\ell^1\), that is the direct limit of some direct
system \(\big(\{\B_i\}_{i\in I},\{\varphi_{ij}\}_{i\leq j}\big)\)
of finite-dimensional Banach spaces, for instance by Lemma \ref{lem:repr_mod}.
Since the spaces \(\B'_i\) have the Radon-Nikod\'{y}m property while
\(\ell^\infty=(\ell^1)'\) does not (cf.\ \cite{DiestelUhl77}),
we know from \eqref{eq:RNP} that
\begin{equation}\label{eq:RNP_ex}\begin{split}
L^0\big([0,1],\B_i\big)^*&\cong L^0\big([0,1],\B'_i\big)\quad\text{ for every }i\in I,\\
L^0\big([0,1],\ell^1\big)^*&\ncong L^0\big([0,1],\ell^\infty\big).
\end{split}\end{equation}
Therefore it holds that
\[\begin{split}
\pi^*\varprojlim\B'_\star&\overset{\eqref{eq:IDL_vs_dual}}\cong
\pi^*\big(\varinjlim\B_\star\big)'\cong\pi^*\ell^\infty
\overset{\eqref{eq:pullback_special_case}}\cong L^0\big([0,1],\ell^\infty\big)
\overset{\eqref{eq:RNP_ex}}\ncong L^0\big([0,1],\ell^1\big)^*
\overset{\eqref{eq:pullback_special_case}}\cong(\pi^*\ell^1)^*\\
&\overset{\phantom{\eqref{eq:IDL_vs_dual}}}\cong\big(\pi^*\varinjlim\B_\star\big)^*
\overset{\eqref{eq:DL_vs_pullback}}\cong\big(\varinjlim \pi^*\B_\star\big)^*
\overset{\eqref{eq:IDL_vs_dual}}\cong\varprojlim(\pi^*\B_\star)^*
\overset{\eqref{eq:pullback_special_case}}\cong\varprojlim L^0\big([0,1],\B_\star\big)^*\\
&\overset{\eqref{eq:RNP_ex}}\cong\varprojlim L^0\big([0,1],\B'_\star\big)
\overset{\eqref{eq:pullback_special_case}}\cong\varprojlim\pi^*\B'_\star.
\end{split}\]
Summing up, we have found an inverse system
\(\big(\{\B'_i\}_{i\in I},\{\varphi^{\rm adj}_{ij}\}_{i\leq j}\big)\)
of Banach spaces for which it holds that
\(\pi^*\varprojlim\B'_\star\ncong\varprojlim\pi^*\B'_\star\).
\fr}\end{remark}
\def\cprime{$'$} \def\cprime{$'$}

\end{document}